  \newtheorem{The}{Theorem}[section]
  \newtheorem{Pro}[The]{Proposition}
  \newtheorem{Lem}[The]{Lemma}
  \newtheorem{Cor}[The]{Corollary}
  \newtheorem{Defs}[The]{Definitions}
  \newtheorem{Rem}[The]{Remark}
  \newtheorem{Examp}[The]{Example}
    \let\oldproofname=\proofname
   \renewcommand{\proofname}{\textit{\rm\bf\oldproofname}}
\title{\bf\Large Virtually Semisimple Modules and a
Generalization of the Wedderburn-Artin Theorem \thanks {The
research of the first author was in part supported by
a grant from IPM (No. 94130413). This research is partially
carried out in the IPM-Isfahan Branch.}
  \thanks
{{\it Key Words}: Semisimple ring; semisimple module;
Wedderburn-Artin theorem; principal left ideal ring; virtually
semisimple ring; completely virtually semisimple ring.}
\thanks {2010{ \it Mathematics Subject Classification}. Primary
16D60, 16S50, 13F10 Secondary 16D70. }}
\author{{\bf M. Behboodi$^{{\rm a,b}}$\thanks{Corresponding
author.}, {\bf A. Daneshvar}$^{{\rm a}}$ ~ and~} {\bf M. R.
Vedadi$^{{\rm a}}$}\\
{\small{ $^{{\rm a}}$Department of Mathematical Sciences,
Isfahan University of Technology}}\vspace{-1mm}\\
    {\small{ P.O.Box :  84156-83111,   Isfahan,   Iran}}\\
{\small{ $^{{\rm b}}$School of Mathematics, Institute for
Research in Fundamental Sciences
(IPM)}}\vspace{-1mm}\\ {\small{ P.O.Box : 19395-5746, Tehran,
Iran}}\vspace{-1mm}\\
   {\small{mbehbood@cc.iut.ac.ir}}\vspace{-1mm}\\
   {\small{a.daneshvar@math.iut.ac.ir}}\vspace{-1mm}\\
   {\small{mrvedadi@cc.iut.ac.ir}}}
  \date{}
\begin{document}
  \maketitle

 \begin{abstract}
 \small{ \noindent
 By any measure,  semisimple modules form one of  the most important classes of modules and play a distinguished role in the module theory and its applications. One of the most
fundamental results in this area   is the Wedderburn-Artin theorem. In this paper, we establish natural generalizations of semisimple modules and give a generalization  of the Wedderburn-Artin theorem. We study modules in which every submodule is isomorphic to a direct summand and name them {\it virtually semisimple modules}. A module $_RM$ is called {\it completely virtually semisimple} if each submodules of $M$ is a virtually semisimple module. A ring $R$ is then called {\it left} ({\it completely}) {\it virtually semisimple} if $_RR$ is a left (compleatly) virtually
semisimple $R$-module. Among other things, we give several characterizations of  left (completely) virtually semisimple rings. For instance, it is shown that a ring $R$ is left completely virtually semisimple if and only if
$R \cong \prod _{i=1}^ k M_{n_i}(D_i)$ where $k, n_1, ...,n_k\in \Bbb{N}$ and each $D_i$ is a principal left ideal domain. Moreover, the integers $k,~ n_1, ...,n_k$ and the principal left
ideal domains $D_1, ...,D_k$ are uniquely determined (up to isomorphism) by $R$.  }
   \end{abstract}

 \section{\bf Introduction}
 In the study of $k$-algebras $A$ (associative or non-associative) over a commutative ring $k$, the semisimplicity plays an  important role. It is known that for a  $k$-module $M$, any $A$-module structure corresponds to a $k$-algebra homomorphism $\theta: A \rightarrow$ End$_k(M)$ with $\theta(a) =$ the multiplication map by $a\in A$ and $\ker \theta
$ = Ann$_A(M)$. Furthermore, if $M$ is an $A$-module with $S = $ End$_A(M)$,  then the image of
$\theta$ lies in End$_S(M)$. This shows that if either $k$ is a division ring and $A$ is a simple
algebra or $A$ has a simple faithful module, then  $A$ can be  represented by linear transformations and in  the finite dimension case, $A$ is a subalgebra of an $n$-by-$n$ matrix
algebra over a division ring. The concept of semisimple module is often introduced as a direct sum of simple ones.
 E. Cartan characterized semisimple Lie algebras and shown that every finite-dimensional module over a semisimple
  Lie algebra with zero characteristic is a
 semisimple module (see for example \cite[Page 27]{Shaf}).
Finite  dimensional algebras  are serious examples of Artinian
algebras
 (algebras with descending chain conditions on their left
(right) ideals). They are considered as Artinian rings in the
ring theory.
 Artinian rings with a
 faithful semisimple module are known to  be {\em semisimple} rings
 that form a
fundamental and important class of rings. If $R$ is any ring, an
$R$-module $M$
is said to be {\it simple}  in case $M\neq (0)$
and it has no non-trivial submodules. {\em Semisimple}
$R$-modules are
then considered as direct sums of simple $R$-modules. It is well known that $R$ is a semisimple ring
if and only if the left
(right) $R$-module
$R$ is semisimple if and only if all left (right) $R$-modules
are
semisimple.
As the historical perspective, the fundamental characterization of
finite-dimensional $k$-algebras was originally done by Wedderburn
in his 1907 paper (\cite{Wed}). After that in 1927, E. Artin generalizes the
Wedderburn's theorem for semisimple algebras (\cite{Ar}). In fact, the
Wedderburn-Artin's result is a landmark in the theory of
noncommutative rings. We recall this theorem as follows:

\noindent{\bf Wedderburn-Artin Theorem}: {\em A ring $R$ is
semisimple if
and only if   $R \cong \prod _{i=1}^ k M_{n_i}(D_i)$ where $k, n_1, \ldots,n_k\in\Bbb{N}$ and each $D_i$ is a  division ring.  Moreover, the integers $k, n_1, \ldots,n_k$  and the division rings $D_1,\dots , D_k$  are uniquely determined  (up to a
permutation).}

By the Wedderburn-Artin Theorem, the study of semisimple rings can be reduced to the
study of modules over division rings. We note that a semisimple module is a type of module that can be understood
easily from its parts. More precisely, a module $M$ is semisimple if and only if every
submodule of $M$ is a direct summand. In this paper,  we study modules (resp., rings) in which every submodule (resp., left ideal) is isomorphic to a direct summand.  We will show that the study of
such rings can  be reduced to the study of modules over principal left ideal domains. This gives a
generalization of the Wedderburn-Artin Theorem.

 Throughout this paper, all rings are associative with identity and all modules are unitary. Following \cite{Good2}, we denote
by ${\rm K.dim}(M)$ the {\it Krull dimension} of a module $M$.
If $\alpha\geqslant 0$ is an ordinal number then the module $M$
is said to be $\alpha$-{\it critical} provided
${\rm K.dim}(M) = \alpha$ while ${\rm K.dim}(M/N) <\alpha$ for
all
non-zero submodules $N$ of $M$. A module is called {\it
critical}
 if it is $\alpha$-critical for
some ordinal $\alpha\geqslant 0$.
\begin{Defs}\label{lem:krullR} {\rm
 We say that an $R$-module $M$ is  {\it virtually semisimple}
if each submodule of $M$ is isomorphic to a direct summand of
$M$.
If each submodule of $M$ is a virtually semisimple module, we
call $M$ {\it completely virtually semisimple}.
If $_RR$ is (resp., $R_R$) is a virtually semisimple module, we
then say that $R$ is a {\it left}
(resp., {\it right}) {\it virtually semisimple ring}. A {\it
left} (resp., {\it right})
{\it completely  virtually simple ring} is similarly defined.} \end{Defs}

In Section 2,
   we introduce the fundamental tools of this study and  give
some basic properties of virtually semisimple modules. Among of other things, we show for a non-zero
virtually semisimple
module $_RM$ the following statements are equivalent: {\rm (1)}
$_RM$ is finitely generated; {\rm (2)} $_RM$ is Noetherian; {\rm
(3)} ${\rm u.dim}(_RM) < + \infty$, and
{\rm (4)} $M \cong R/P_1 \oplus \ldots \oplus R/P_n$ where
$n\in\Bbb{N}$,
each $P_i$ is a quasi prime left ideal of $R$ such that $R/P_i$
is a critical Noetherian $R$-module
(here, ${\rm u.dim}(_RM)$ is the uniform dimension of the module
$_RM$ and we say that a left ideal $P$ of a ring $R$
    is {\it quasi prime} if $P\neq R$ and,
for ideals $A$, $B\subseteq R$, $AB\subseteq P \subseteq A\cap
B$
implies that $A\subseteq P$ or $B\subseteq P$) (see Proposition
\ref{pro:property1}). Also, it is shown that a finitely
generated quasi projective 1-epi-retractable $R$-module $M$
is virtually semisimple if and only if End$_R(M) $ is a
semiprime principal left ideal
ring (Theorem \ref{the:end}). An $R$-module $M$ is called
(resp., {\it $n$-epi-retractable}) {\it epi-retractable} if for
every (resp., n-generated)
submodule $N$ of $M$ there exists an epimorphism
$f : M \longrightarrow N$. This concept is studied in \cite
{G-V}.

Section 3 is devoted to study of the structure of left
(completely) virtually semisimple rings.
 We give several characterizations of left virtually semisimple
rings in Theorem \ref{the:charlvss}. We shall give some examples
to show that the (completely) virtually semisimple
are not symmetric properties for a ring, and also completely
virtually
semisimple modules properly lies between the class of semisimple
modules and the class of virtually
semisimple modules; see Examples
\ref{exa:notunique}$\sim$\ref{exa:notsemsimple}.
 While the left virtually
 semisimple is not a Morita invariant ring property, we proved
 that the left completely virtually semisimple is (see
 Proposition \ref{pro:ringmorita}). In Theorem \ref{the:gene},
we will give the following generalization of the
Wedderburn-Artin theorem:

\noindent{\bf A Generalization of the Wedderburn-Artin Theorem}:
{\em A ring $R$ is left completely virtually semisimple if and
only if $R \cong \prod _{i=1}^ k M_{n_i}(D_i)$ where $k, n_1, ...,n_k\in \Bbb{N}$ and each $D_i$ is a principal left ideal domain. Moreover, the integers $k,~ n_1, ...,n_k$ and the principal left
ideal domains $D_1, ...,D_k$ are uniquely determined (up to isomorphism) by $R$.}

Any unexplained terminology and all the basic results on rings
and
modules that are used in the sequel can be found in \cite{Ful}
and
\cite{Lam2}.

\section{\bf Virtually semisimple modules}

The subject of our study in this section is some basic
properties of virtually semisimple modules. We introduce the
fundamental tools of this study for latter uses.

A module $_RM$ is said to be {\it Dedekind finite} if $M = M
\oplus N$ for some $R$-module $N$, then $N = 0$. Let $M$ and $P$
be $R$-modules. We recall that $P$ is {\it $M$-projective} if
every diagram
in $R$-Mod with exact row
 \begin{displaymath}
    \xymatrix{& P\ar[d] & \\
    M \ar[r] & N \ar[r] & 0 }
\end{displaymath}
can be extended commutatively by a morphism $P \longrightarrow
M$. Also, if $P$ is $P$-projective, then $P$ is called {\it
quasi projective}. A direct summand $K$ of $M$ is denoted by $K
\leq^{\oplus} {_RM}$.

\begin{Pro} \label{pro:property0}
The following statements hold.

\noindent {{\rm (i)}} Let $_RM$ be a non-zero virtually
semisimple module. If $M=M_1 \oplus M_2$ is a decomposition
\indent for $_RM$ such that ${Hom_R}(M_1, M_2)=0$ then $M_2$ is
a virtually semisimple $R$-module.\\
\noindent {{\rm (ii}}) If $I$ is an ideal of $R$ with $IM=0$.
Then $M$ is a virtually semisimple ($R/I$)-module if \indent and
only if $_RM$ is virtually semisimple.\\
\noindent {{\rm (iii)}} A module $_RM$ is virtually semisimple
quasi projective if and only if it is an epi- \indent
retractable $R$-module and all of its submodules are
 $M$-projective.\\
\noindent {{\rm (iv)}} Being (completely) virtually semisimple
module is a Morita invariant property.\\
\noindent {{\rm (v)}} Let $_RM$ be virtually semisimple and
$W\leq {_RM}$. If $W$ contains any submodule $K$ of \indent $M$
with $K$ is embedded in $W$, then $_RW$ is virtually semisimple
and there is a direct \indent summand $K$ of $_RM$ such that
${K}\cong {W}$ and $K\oplus K'=W$ for some submodule $K'$. In
\indent particular, if $_RW$ is Dedekind finite, then $W \leq
^{\oplus} M$.
\end{Pro}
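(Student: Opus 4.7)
The plan is to treat the five parts separately, with the main work concentrated in (i) and (v). Parts (ii) and (iv) are essentially formal: when $IM=0$, $R$-submodules and $(R/I)$-submodules of $M$ coincide, as do the notions of direct summand and of isomorphism, giving (ii) immediately; and in (iv), any Morita equivalence $F:R\text{-Mod}\to S\text{-Mod}$ preserves the submodule lattice, direct sums, and isomorphisms, so the defining conditions of (completely) virtually semisimple transport verbatim. For (iii), given $N\leq M$ with $M$ virtually semisimple and quasi projective, pick $K\leq^{\oplus}M$ isomorphic to $N$: the complementary projection $M\to K\cong N$ witnesses epi-retractability, and $K$ (a direct summand of the quasi projective module $M$) is $M$-projective, as is $N$. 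Conversely, $M$ itself is $M$-projective, so quasi projective; given $N\leq M$, epi-retractability yields $f:M\twoheadrightarrow N$, and $M$-projectivity of $N$ lifts $\mathrm{id}_N$ across $f$ to a splitting $g:N\to M$, placing $N\cong g(N)\leq^{\oplus}M$.

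Part (i) carries the main technical weight. Given $N\leq M_2$, virtual semisimplicity of $M$ supplies an isomorphism $\sigma:N\xrightarrow{\sim}K$ with $M=K\oplus L$. Let $q:M\to K$ be the projection along $L$. The composite
\[
M_1 \xrightarrow{\;q|_{M_1}\;} K \xrightarrow{\;\sigma^{-1}\;} N \hookrightarrow M_2
\]
lies in $\mathrm{Hom}_R(M_1,M_2)=0$, and since $\sigma^{-1}$ is injective this forces $q(M_1)=0$, i.e.\ $M_1\leq L$. The retraction $p_1|_L:L\to M_1$ (restriction of the projection of $M$ along $M_2$) then splits off $M_1$ from $L$, giving $L=M_1\oplus(L\cap M_2)$ and hence $M=M_1\oplus[K\oplus(L\cap M_2)]$. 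Thus $K\oplus(L\cap M_2)$ is a complement of $M_1$ in $M$, and $p_2$ restricts to an isomorphism $K\oplus(L\cap M_2)\xrightarrow{\sim}M_2$, exhibiting $N\cong K$ as a direct summand of $M_2$.

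For (v), given any $V\leq W$, virtual semisimplicity of $M$ yields $V\cong K$ with $K\leq^{\oplus}M$; since $K$ embeds in $W$ (through the isomorphism to $V\leq W$), the standing hypothesis forces $K\leq W$, and with $M=K\oplus L$ the modular law gives $W=K\oplus(W\cap L)$, so $V\cong K$ is a direct summand of $W$. This proves $W$ is virtually semisimple; specializing to $V=W$ produces $K\leq^{\oplus}M$ with $K\cong W$ and $W=K\oplus K'$. If $W$ is Dedekind finite, then $W=K\oplus K'\cong W\oplus K'$ forces $K'=0$, i.e.\ $W\leq^{\oplus}M$. The main obstacle is the Hom-vanishing step in (i): one must convert the one-sided condition $\mathrm{Hom}_R(M_1,M_2)=0$ into the containment $M_1\leq L$, and the trick is to recognize that the isomorphism $\sigma$ supplies a canonical injection $\sigma^{-1}:K\to M_2$ along which to pull back the triviality of $\mathrm{Hom}(M_1,M_2)$.
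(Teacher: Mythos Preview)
Your proof is correct and follows essentially the same route as the paper's. The only notable difference is in part (i): the paper first observes that $\mathrm{Hom}_R(M_1,M_2)=0$ makes $M_1$ fully invariant in $M$, uses this to decompose $M_1=(M_1\cap K)\oplus(M_1\cap L)$, and then argues $M_1\cap K=0$ to obtain $M_1\subseteq L$; you reach $M_1\subseteq L$ more directly via the single composite $M_1\xrightarrow{q}K\xrightarrow{\sigma^{-1}}N\hookrightarrow M_2$, which is a mild streamlining of the same idea. Parts (ii)--(v) are argued in the same way as in the paper.
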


\begin{proof}
(i) Let $K \leq M_2$. Since $_RM$ is a virtually semisimple
module, there is a decomposition $N \oplus N'=M$ with $N \cong
K$. It easily seen that $M_1$ is fully invariant submodule of
$M$ because ${\rm Hom_R}(M_1, M_2)=0$. It follows that
$M_1=(N \cap M_1) \oplus (N' \cap M_1)$ and
hence $N \cap M_1=0$ because ${\rm Hom_R}(M_1, N)=0$. Thus $M_1
\subseteq N'$ which implies that $N'=M_1 \oplus (N' \cap M_2)$.
Now we have $M=N \oplus M_1 \oplus (N' \cap M_2)=M_1 \oplus
M_2$. It follows that $N$ and hence $K$ is isomorphic to a
direct summand of $M_2$, as desired.\\
(ii) This is routine.\\
(iii) The necessity is clear by the definition and
\cite[Proposition 18.2]{Wis}. Conversely, assume that $_RM$ is
epi-retractable and every submodule of $M$ is $M$-projective. If
$N\leq M$, then there is a surjective homomorphism $f : M
\longrightarrow N$. Since now $N$ is $M$-projective, ${\rm
Ker}f$ must be a direct summand of $M$, the proof is complete.\\
(iv) It is shown that Morita equivalences preserve monomorphisms
and direct sums (see for instance \cite[\S 21]{Ful}).\\
(v) If $W \leq {_RM}$ as stated in the above, then by our
assumption, $W \cong K$ where $K \leq^{\oplus} {_RM}$ and $K
\subseteq W$.
It follows that $K$ is also a direct summand of $W$. Similarly,
if $N\leq W$ and $V \oplus V'=M$ with $V \cong N$, we can deduce
that $N$ is isomorphic to a direct summand of $W$, that is $_RW$
is virtually semisimple. The last statement is now clear.
\end{proof}

Let $R_1$ and $R_2$ be rings and $T=R_1 \oplus R_2$. It is
well-known that any $T$-module $M$ has the form $M_1 \oplus
M_2$, for some $R_i$-modules $M_i$($i$=1, 2). In fact,
$M=e_1M\oplus e_2M$ where $e_1$ and $e_2$ are central orthogonal
idempotents in $T$ such that $e_1R_2=e_2R_2=0$ and
$e_1+e_2=1_T$. Clearly $e_iM$ is naturally an $R_i$-module (as
well as $T$-module) for $i$=1, 2. This shows that ${\rm
Hom_T}(M_i,M_j)=0$ for $i\neq j$. Thus by Proposition
\ref{pro:property0}, we have the following result:

\begin{Cor}\label{cor:R1R2}
Let $R_i$ (1 $\leqslant$ i $\leqslant$ n) be rings, $T=
\prod_{i=1} ^n R_i$ and $M=M_1 \oplus ... \oplus M_n$ be a
$T$-module where each $M_i$ is an $R_i$-module. Then $_{R_i}
M_i$ is (completely) virtually semisimple if and only if $_TM$
is (completely) virtually semisimple.
\end{Cor}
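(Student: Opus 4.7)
The plan is to leverage the observation made in the paragraph immediately preceding the corollary: because the $e_i$ are central orthogonal idempotents summing to $1_T$, every $T$-submodule $N\leq {_TM}$ decomposes as $N=e_1N\oplus\cdots\oplus e_nN$ with $e_iN\leq {_{R_i}M_i}$, and moreover ${\rm Hom}_T(M_i,M_j)=0$ for $i\neq j$. Thus $T$-submodules of $M$ are exactly tuples of $R_i$-submodules of the $M_i$, and the passage between $_TM_i$ and $_{R_i}M_i$ is handled by annihilators (the ideal $I_i=\prod_{j\neq i}R_j$ kills $M_i$, and $T/I_i\cong R_i$).

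For the ``only if'' direction (each $_{R_i}M_i$ virtually semisimple $\Rightarrow$ $_TM$ virtually semisimple), I would take an arbitrary $T$-submodule $N\leq M$, decompose $N=\bigoplus_{i=1}^n e_iN$, and use the hypothesis to pick, for each $i$, a direct summand $K_i\leq^{\oplus}{_{R_i}M_i}$ with $e_iN\cong K_i$ as $R_i$-modules. Writing $M_i=K_i\oplus K_i'$, the $T$-module $K=\bigoplus_i K_i$ is a direct summand of $M=\bigoplus_i(K_i\oplus K_i')$, and the componentwise isomorphisms assemble into a $T$-isomorphism $N\cong K$. Thus every submodule of $_TM$ is isomorphic to a direct summand.

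For the ``if'' direction I would invoke the earlier proposition directly: applying Proposition~\ref{pro:property0}(i) with the decomposition $M=\big(\bigoplus_{j\neq i}M_j\big)\oplus M_i$, valid because ${\rm Hom}_T\big(\bigoplus_{j\neq i}M_j,\,M_i\big)=0$, shows that each $_TM_i$ is virtually semisimple. Then Proposition~\ref{pro:property0}(ii), applied to the ideal $I_i$ with $I_iM_i=0$ and $T/I_i\cong R_i$, transfers virtual semisimplicity from $_TM_i$ to $_{R_i}M_i$.

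For the ``completely'' version I would iterate: any $T$-submodule $N$ of $M$ again splits as $\bigoplus_i e_iN$ with $e_iN\leq M_i$. If each $_{R_i}M_i$ is completely virtually semisimple, then each $e_iN$ is virtually semisimple over $R_i$, whence by the equivalence just proved $N$ is virtually semisimple over $T$; so $_TM$ is completely virtually semisimple. Conversely, if $_TM$ is completely virtually semisimple, any $R_i$-submodule of $M_i$ is a $T$-submodule of $M$ and is therefore virtually semisimple over $T$, hence over $R_i$ by (ii). There is no genuine obstacle here; the only point one must be careful about is that the direct-summand and isomorphism statements have to hold at the $T$-module level, which is exactly what the centrality of the $e_i$ and the Hom-vanishing guarantee.
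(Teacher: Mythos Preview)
Your proposal is correct and follows essentially the same approach as the paper, which simply records the corollary as an immediate consequence of Proposition~\ref{pro:property0} and the preceding paragraph on central idempotents without further elaboration. You have spelled out the details the paper leaves implicit---in particular the direction showing $_TM$ is virtually semisimple from the $_{R_i}M_i$, which is not literally a part of Proposition~\ref{pro:property0} but follows, as you argue, from the componentwise splitting $N=\bigoplus_i e_iN$ of any $T$-submodule.
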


A non-zero submodule $N$ of $M$ is called {\it essential
submodule} if $N$ has non-zero intersection with every non-zero
submodule of $M$ and denoted by $N\leq_e M$. Each left
$R$-module $M$ has a singular submodule consisting of elements
whose annihilators are essential left ideals in $R$. In set
notation it is usually denoted as ${\rm Z}(M)$ and $M$ is called
a {\it singular} (resp., {\it non-singular}) {\it module} if
${\rm Z}(M)=M$ (resp., ${\rm Z}(M)=0$). Also, direct sum of
simple submodules of $_RM$ is denoted by ${\rm Soc}(_RM)$.
Direct sum of pairwise isomorphic submodules is called {\it
homogenous components}.
The following result shows that the study of virtually
semisimple modules $M$ with Dedekind finite $Z(M)$ reduces to
the study of such modules when they are either singular or
non-singular.

\begin{Pro}\label{pro:Zsoc} Let $R$ be a ring and $M$ be an
$R$-module. Then:

\noindent {\rm (i)} If $M$ is virtually semisimple such that
$Z(M)$ is Dedekind finite. Then $M \cong W \oplus L$ \indent
where $W$ is a singular virtually semisimple $R$-module and $L$
is a non-singular virtually \indent semisimple $R$-module.\\
\noindent {\rm (ii)} If every homogenous components of $Soc(M)$
is finitely generated. Then $M$ is virtually \indent semisimple
if and only if $M\cong W \oplus L$ where $W$ is a semisimple
$R$-module and $L$ is a \indent virtually semisimple $R$-module
with $Soc(L)=0$.
\end{Pro}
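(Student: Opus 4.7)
For part (i), the plan is to apply Proposition~\ref{pro:property0}(v) to $W={\rm Z}(M)$. Any submodule $K\leq M$ that embeds in ${\rm Z}(M)$ is itself singular (an embedding preserves annihilators), hence $K\subseteq{\rm Z}(M)$; combined with the hypothesis that ${\rm Z}(M)$ is Dedekind finite, part~(v) gives $M=W\oplus L$ with $W={\rm Z}(M)$ virtually semisimple. Since ${\rm Z}$ distributes over direct sums, ${\rm Z}(L)=0$ and $L$ is non-singular. Any $R$-homomorphism $W\to L$ has singular image inside non-singular $L$ and therefore vanishes, so ${\rm Hom}_R(W,L)=0$ and Proposition~\ref{pro:property0}(i) upgrades $L$ to virtually semisimple. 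The forward direction of~(ii) follows the same template with $W={\rm Soc}(M)$: submodules of $M$ that embed in ${\rm Soc}(M)$ are semisimple and hence lie in ${\rm Soc}(M)$; Dedekind finiteness of $W$ follows from writing ${\rm Soc}(M)=\bigoplus_i S_i^{(n_i)}$ with each $n_i<\infty$ by hypothesis and invoking uniqueness of multiplicities of simples in a semisimple module. Then ${\rm Soc}(L)=0$ by distributivity, ${\rm Hom}_R(W,L)=0$ because every such image is a semisimple submodule of $L$ and lands in ${\rm Soc}(L)=0$, and Proposition~\ref{pro:property0}(i) again makes $L$ virtually semisimple.

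For the converse in~(ii), fix $N\leq M=W\oplus L$ and use the semisimplicity of $W$ to split $W=(N\cap W)\oplus W_0$. The key identity is
\[
N=(N\cap W)\oplus\bigl(N\cap(W_0\oplus L)\bigr).
\]
Indeed, for $n\in N$ written $n=a+b+\ell$ using $M=(N\cap W)\oplus W_0\oplus L$, the fact that $a\in N$ gives $n-a=b+\ell\in N\cap(W_0\oplus L)$, proving the sum; and $W\cap(W_0\oplus L)=W_0$ forces the two summands to meet in $(N\cap W)\cap W_0=0$. The projection $M\to L$ restricted to $N\cap(W_0\oplus L)$ has kernel $N\cap W_0=0$, so this second summand embeds in $L$ and, by virtual semisimplicity of $L$, is isomorphic to a direct summand of $L$. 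Together with $N\cap W\leq^{\oplus}W$, this exhibits $N$ as isomorphic to a direct summand of $M$.

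The main obstacle is the converse in~(ii). Proposition~\ref{pro:property0}(i) runs only in the direction ``$M$ virtually semisimple $\Rightarrow$ $M_2$ virtually semisimple'' and is not strong enough to glue $W$ and $L$ together; moreover, the short exact sequence $0\to N\cap W\to N\to N/(N\cap W)\to 0$ need not split for abstract reasons, since $N/(N\cap W)$ need not be projective. The argument side-steps this by selecting a complement $W_0$ of $N\cap W$ inside $W$ so that the splitting happens automatically from the ambient decomposition $M=W\oplus L$.
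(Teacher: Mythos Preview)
Your proof is correct and follows essentially the same strategy as the paper. For part~(i) and the forward direction of~(ii) the paper simply cites Proposition~\ref{pro:property0}(i) and~(v), exactly as you do; for the converse of~(ii) the paper splits $W=\mathrm{Soc}(N)\oplus W'$ (your $N\cap W$ and $W_0$, since $W=\mathrm{Soc}(M)$), obtains $N=\mathrm{Soc}(N)\oplus T$ by modularity, and argues that $\mathrm{Soc}(T)=0$ forces $T$ to embed in $L$---your explicit kernel computation for the projection $W_0\oplus L\to L$ is just a more concrete version of that last step.
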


\begin{proof}
(i) This is obtained by parts (i) and (v) of Proposition
\ref{pro:property0}.\\
(ii) By hypothesis, ${\rm Soc}(M)$ is Dedekind finite and hence,
$M={\rm Soc}(M) \oplus L$ when $_RM$ is virtually semisimple.
Conversely, assume that $M=W\oplus L$ where $_RW$ is semisimple
and $_RL$ is virtually semisimple with ${\rm Soc}(L)=0$. Note
that $W={\rm Soc}(M)$ and let $N \leq {_RM}$. Then there exists
a submodule $W'$ of $M$ such that ${\rm Soc}(N) \oplus W'=W$.
Since ${\rm Soc}(N)\leq N \leq W \oplus L$, so $N={\rm Soc}(N)
\oplus T$ where $T$ is a submodule of $M$. Since ${\rm
Soc}(M)=W$, so ${\rm Soc}(T)=0$ and hence $T$ is embedded in
$L$.
Now, since $L$ is virtually semisimple, so $T$ is isomorphic to
a direct summand of $L$. It follows that $N$ is isomorphic to a
direct summand of $M$. Thus $M$ is a virtually semisimple
$R$-module.
\end{proof}

The following example shows that the hypothesis of {\it ``$Z(M)$
is Dedekind finite"} in Part (i), and {\it ``every homogenous
components of $Soc(M)$ is finitely generated"} in Part (ii) of
Proposition \ref{pro:Zsoc} can not be relaxed.

\begin{Examp} {\rm
For $R=\mathbb{Z}_4$ and $M={\mathbb{Z}_4 }\oplus
(\bigoplus_{i=1}^\infty{\mathbb{Z}_2})$, we have ${\rm
Z}(_RM)={\rm Soc}(_RM)$ is not a direct summand of $_RM$. Since
$R$ is a commutative Artinian principle ideal ring, every
$R$-modules is a direct sums of cyclic modules (see \cite[Result
1.3]{Beh}). We note that since $M$ is countable, all submodules
of $M$ are also countable. It follows that every submodule of
$M$ is isomorphic to $\mathbb{Z}_4 \oplus (\bigoplus_{j\in
J}{\mathbb{Z}_2})$ or $\bigoplus_{j\in J}{\mathbb{Z}_2}$ where
$J$ is an index set with $|J|\leq \aleph$, i.e., $M$ is a
virtually semisimple $R$-module.}
\end{Examp}

A module is called a {\it uniform module} if the intersection of
any two non-zero submodules is non-zero. The {\it Goldie
dimension} of a module $M$, denoted by {\it $ u.dim(M)$}, is
defined to be $n$ if there exists a
finite set of uniform submodules $U_i$ of $M$ such that
$\bigoplus _{i=1}^n U_i$ is an essential submodule of $M$.
If no such finite set of submodules exists, then ${\rm u.dim}
(M)$ is defined to be infinity
(in the literature, the Goldie dimension of $M$ is also called
the rank, the Goldie rank, the uniform dimension of $M$).

Next we need the following two lemmas.

\begin{Lem}\label{lem:nothekrull}
\textup{(See \cite[Lemmas 15.3 and 15.8]{Good2})} If $_RM$ is a
Noetherian module, then $K.dim(M)$ is defined. If $_RM$ is a
non-zero module with Krull dimension, then $_RM$ has
a critical submodule.
\end{Lem}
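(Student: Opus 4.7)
The plan is to treat the two assertions separately, using only the recursive definition of Krull dimension: ${\rm K.dim}(M)\leq\alpha$ means that in every descending chain $M=M_0\supseteq M_1\supseteq\cdots$ one has ${\rm K.dim}(M_i/M_{i+1})<\alpha$ for almost all $i$. Throughout I shall freely use two standard consequences of this definition: if $N\leq M$ and ${\rm K.dim}(M)$ is defined, then ${\rm K.dim}(N)$ is defined with ${\rm K.dim}(N)\leq{\rm K.dim}(M)$ (chains in $N$ are chains in $M$), and likewise ${\rm K.dim}(M/N)\leq{\rm K.dim}(M)$. The collection of submodules of $M$ is a set, so suprema and infima of ordinals indexed by submodules are well defined.

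For the first assertion, I argue by contradiction. Assume $M$ is Noetherian but ${\rm K.dim}(M)$ is undefined. The family $\mathcal{F}=\{N\leq M:{\rm K.dim}(M/N)\text{ is undefined}\}$ is non-empty, and by the ACC it has a maximal element $N_0$. Replacing $M$ by $M/N_0$, I may assume ${\rm K.dim}(M)$ is undefined while ${\rm K.dim}(M/N)$ is defined for every nonzero $N\leq M$. Put $\beta=\sup\{{\rm K.dim}(M/N):0\neq N\leq M\}$. For any strictly descending chain $M=M_0\supsetneq M_1\supsetneq\cdots$ each $M_i$ is nonzero (a strict chain cannot pass through $0$), so the embedding $M_i/M_{i+1}\hookrightarrow M/M_{i+1}$ gives ${\rm K.dim}(M_i/M_{i+1})\leq{\rm K.dim}(M/M_{i+1})\leq\beta$. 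Non-strict steps contribute zero factors, which trivially have Krull dimension $<\beta+1$. Hence ${\rm K.dim}(M)\leq\beta+1$, contradicting the assumption. The Noetherian hypothesis is used solely to extract $N_0$.

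For the second assertion, set $\alpha={\rm K.dim}(M)$. Every nonzero submodule of $M$ has Krull dimension at most $\alpha$, so by well-ordering of the ordinals $\beta=\min\{{\rm K.dim}(N):0\neq N\leq M\}$ exists. Fix $N\leq M$ with ${\rm K.dim}(N)=\beta$; minimality forces every nonzero $K\leq N$ to satisfy ${\rm K.dim}(K)=\beta$. I claim $N$ contains a $\beta$-critical submodule. If $N$ itself is not $\beta$-critical, the definition supplies a nonzero proper $N_1\leq N$ with ${\rm K.dim}(N/N_1)\geq\beta$, hence equal to $\beta$. Iterating, the failure of each $N_i$ to be $\beta$-critical would yield a strictly descending chain $N=N_0\supsetneq N_1\supsetneq\cdots$ with each ${\rm K.dim}(N_i/N_{i+1})=\beta$, contradicting ${\rm K.dim}(N)=\beta$. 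Hence the construction halts at a $\beta$-critical $N_i$, the desired critical submodule. The only subtle point is that each $N_{i+1}$ must be nonzero so the recursion continues, but this is built into the definition of $\beta$-critical, which requires the witnessing submodule to be nonzero.
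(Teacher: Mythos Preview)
Your argument is correct. The paper itself gives no proof of this lemma: it is simply quoted from Goodearl--Warfield, so there is nothing to compare against at the level of strategy. What you have written is essentially the standard textbook proof found in that reference: for the first statement one passes via Noetherian induction to the case where every proper quotient has Krull dimension and then bounds ${\rm K.dim}(M)$ by $\sup_{N\neq 0}{\rm K.dim}(M/N)+1$; for the second one takes a submodule of minimal Krull dimension and shows that a failure of criticality would produce an infinite chain whose factors all have that same dimension.

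One cosmetic remark on the first part. Your sentence ``for any strictly descending chain \dots\ each $M_i$ is nonzero'' followed by ``non-strict steps contribute zero factors'' leaves implicit the mixed case in which a general chain has a strict step down to $0$ (say $M_i\supsetneq M_{i+1}=0$). In that case the factor $M_i/M_{i+1}=M_i$ may not have Krull dimension $\leq\beta$, and your embedding argument does not apply since $M_{i+1}=0$. This does no harm: once the chain reaches $0$ it stays there, so at most one such factor occurs, and the ``almost all'' clause in the definition absorbs it. You might add a half-sentence making this explicit.
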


\begin{Lem}\label{lem:krullR}
\textup{(See \cite[Lemma 6.2.5]{Mcc})} If $_RM$ is finitely
generated, then $K.dim(M) \leqslant K.dim(_RR)$.
 \end{Lem}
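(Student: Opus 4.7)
Since $M$ is finitely generated, pick generators $m_1,\dots,m_n$ and define the surjective $R$-homomorphism
\[
\varphi: R^n \longrightarrow M, \qquad (r_1,\dots,r_n)\longmapsto \sum_{i=1}^n r_i m_i.
\]
The plan is to bound $\mathrm{K.dim}(M)$ first by $\mathrm{K.dim}(R^n)$ via the surjection $\varphi$, and then to bound $\mathrm{K.dim}(R^n)$ by $\mathrm{K.dim}({}_RR)$ by induction on $n$.

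The first reduction uses the standard fact that Krull dimension behaves well under quotients: if $N\leq M$ then $\mathrm{K.dim}(M/N)\leq \mathrm{K.dim}(M)$, which follows directly from the transfinite-recursive definition, since every descending chain of submodules of $M/N$ lifts to one in $M$ having the same consecutive factors. Applying this to $\varphi$ yields $\mathrm{K.dim}(M)\leq \mathrm{K.dim}(R^n)$.

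The second reduction rests on the additivity of Krull dimension in short exact sequences: for any exact sequence $0\to A\to B\to C\to 0$ in $R$-Mod one has $\mathrm{K.dim}(B)=\max\{\mathrm{K.dim}(A),\mathrm{K.dim}(C)\}$ (again a direct consequence of the definition, by splitting a descending chain in $B$ into its trace in $A$ and its image in $C$). Applying this to the split exact sequence $0\to R\to R^n\to R^{n-1}\to 0$ and inducting on $n$ gives $\mathrm{K.dim}(R^n)=\mathrm{K.dim}({}_RR)$, which combined with the first step yields the claim.

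\textbf{Main obstacle.} The one point that needs care is the additivity statement for short exact sequences, since it must be proved by transfinite induction on $\alpha=\mathrm{K.dim}(B)$: one shows $\mathrm{K.dim}(A),\mathrm{K.dim}(C)\leq\alpha$ easily (submodule and quotient), while the reverse inequality $\alpha\leq\max\{\mathrm{K.dim}(A),\mathrm{K.dim}(C)\}$ requires that in any descending chain $B_0\supseteq B_1\supseteq\cdots$ in $B$, the factors $B_i/B_{i+1}$ split via the induced chains $A\cap B_i$ in $A$ and $(B_i+A)/A$ in $C$, and then invoking the inductive hypothesis. This is exactly the technical core of \cite[Lemma 6.2.5]{Mcc}; everything else in the argument is formal.
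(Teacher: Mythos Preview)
The paper does not supply its own proof of this lemma; it simply records the statement and cites \cite[Lemma 6.2.5]{Mcc}. Your argument is correct and is essentially the standard proof found in that reference: cover $M$ by a finitely generated free module, use that Krull dimension does not increase under quotients, and use the exact-sequence formula $\mathrm{K.dim}(B)=\max\{\mathrm{K.dim}(A),\mathrm{K.dim}(C)\}$ together with induction to get $\mathrm{K.dim}(R^n)=\mathrm{K.dim}({}_RR)$. One small caveat worth making explicit is that the inequality is to be read with the convention that if $\mathrm{K.dim}({}_RR)$ is undefined the statement is vacuous; once $\mathrm{K.dim}({}_RR)$ exists, the exact-sequence formula guarantees that $\mathrm{K.dim}(R^n)$ and hence $\mathrm{K.dim}(M)$ exist as well.
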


In the following proposition, we investigate some finiteness
conditions of virtually semisimple modules.

\begin{Pro}\label{pro:property1} For a non-zero virtually
semisimple module $M$, the following conditions are equivalent.

   \noindent {\rm (1)} $M$ is finitely generated.\\
   \noindent {\rm (2)} $M$ is Noetherian.\\
   \noindent {\rm (3)} ${\rm u.dim}(M) < + \infty$.\\
\noindent{\rm (4)} $M \cong R/P_1 \oplus \ldots \oplus R/P_n$
where $n\in\Bbb{N}$ and each $P_i$ is a quasi prime left ideal
of $R$ such \indent that $R/P_i$ is a critical Noetherian
$R$-modules.\vspace{2mm}\\
In any of the above cases, $M\cong  N$ for all $N \leq_{e} M$.      \end{Pro}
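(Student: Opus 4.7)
The plan is to run the cycle $(1)\Rightarrow(2)\Rightarrow(3)\Rightarrow(1)$, then establish $(2)\Leftrightarrow(4)$ separately, and finally deduce the essential-submodule assertion. The engine of the argument will be the following uniform-dimension trick: if $M$ is virtually semisimple with ${\rm u.dim}(M)<\infty$ and $N\leq M$ satisfies ${\rm u.dim}(N)={\rm u.dim}(M)$, then by virtual semisimplicity $N\cong K\leq^{\oplus}M$, so writing $M=K\oplus L$, additivity of uniform dimension forces ${\rm u.dim}(L)=0$, i.e., $M\cong N$.

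For $(1)\Rightarrow(2)$, I would note that every submodule of $M$ is isomorphic to a direct summand of the finitely generated module $M$, hence is itself finitely generated, so $M$ is Noetherian; $(2)\Rightarrow(3)$ is the classical fact that Noetherian modules have finite uniform dimension. For $(3)\Rightarrow(1)$ I would pick uniform submodules $U_1,\ldots,U_n$ of $M$ with $U_1\oplus\cdots\oplus U_n\leq_e M$ and $0\ne x_i\in U_i$; since $Rx_i\leq_e U_i$ by uniformity, $N:=Rx_1\oplus\cdots\oplus Rx_n$ is a finitely generated essential submodule with ${\rm u.dim}(N)=n$, and the trick above gives $M\cong N$, so $M$ is finitely generated.

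For $(2)\Rightarrow(4)$, I would again fix an essential decomposition $U_1\oplus\cdots\oplus U_n\leq_e M$; each $U_i$ is Noetherian, so by Lemma \ref{lem:nothekrull} contains a critical submodule $C$, and a standard Krull-dimension computation on the exact sequence $0\to Rx\to C\to C/Rx\to 0$ (using criticality to force ${\rm K.dim}(C/Rx)<{\rm K.dim}(C)$ for $0\ne x\in C$) shows every nonzero cyclic submodule of a critical module is itself critical of the same Krull dimension. Hence I may take a cyclic critical $C_i=Rx_i\cong R/P_i$ inside each $U_i$; then $C_1\oplus\cdots\oplus C_n\leq_e M$, and the uniform-dimension trick yields $M\cong R/P_1\oplus\cdots\oplus R/P_n$ with each $R/P_i$ critical Noetherian. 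To check that $P_i$ is quasi prime, I would suppose $AB\subseteq P_i\subseteq A\cap B$ with $B\not\subseteq P_i$: then $B/P_i$ is a nonzero submodule of the critical module $R/P_i$ and hence has the same Krull dimension as $R/P_i$; but $A$ annihilates $B/P_i$, so $B/P_i$ is a finitely generated $R/A$-module, and Lemma \ref{lem:krullR} gives ${\rm K.dim}_R(B/P_i)\leq{\rm K.dim}(R/A)$. If also $A\not\subseteq P_i$, then $A/P_i$ is a nonzero submodule of $R/P_i$, so criticality gives ${\rm K.dim}(R/A)<{\rm K.dim}(R/P_i)$, a contradiction; hence $A\subseteq P_i$.

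$(4)\Rightarrow(2)$ is immediate since each $R/P_i$ is Noetherian by assumption, and the final assertion follows directly from the trick: if $N\leq_e M$ then ${\rm u.dim}(N)={\rm u.dim}(M)$, so $M\cong N$. The main obstacle I expect is the quasi-primeness verification, since quasi primeness is a two-sided-ideal condition while criticality lives at the level of left modules; bridging the two will require the change-of-ring observation that $B/P_i$ becomes an $R/A$-module together with the Krull-dimension bound from Lemma \ref{lem:krullR}, and then a careful bookkeeping of which Krull dimensions are forced to coincide with ${\rm K.dim}(R/P_i)$.
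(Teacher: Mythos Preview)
Your proposal is correct and follows essentially the same approach as the paper: both hinge on the uniform-dimension trick (an essential submodule of a virtually semisimple module of finite uniform dimension must be isomorphic to the whole module), and the quasi-primeness argument via Krull dimension and Lemma~\ref{lem:krullR} is the same. The only difference is organizational---the paper runs the cycle $(4)\Rightarrow(1)\Rightarrow(2)\Rightarrow(3)\Rightarrow(4)$, folding the construction of the critical cyclic summands into the step $(3)\Rightarrow(4)$, whereas you separate out $(3)\Rightarrow(1)$ and then handle $(2)\Leftrightarrow(4)$---but the substance is identical.
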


\begin{proof}
(4) $\Rightarrow$ (1) and (1) $\Rightarrow$ (2) are by the facts
that direct summands and finite direct sums of finitely
generated module are again finitely generated.\\
(2) $\Rightarrow$ (3) is well-known for any module (see for
instance \cite[Corollary 5.18]{Good2}).\\
(3) $\Rightarrow$ (4). Assume that ${\rm u.dim}(M)$ is finite
and we set ${\rm u.dim}(M)=n$ where $n\in\Bbb{N}$. Then there
exist uniform cyclic independent submodules $U_1$,\ldots,$U_n$
of $M$ such that $ U_1 \oplus ...\oplus U_n :=N \leq _e{_RM}$.
Since $M$ is virtually semisimple, so $N\cong K$ where $K \oplus
K'=M$ for some $K' \leq M$. Since ${\rm u.dim}(K)={\rm
u.dim}(M)$, so ${\rm u.dim}(K')=0$ (see \cite[Corollary
5.21]{Good2}). Thus $M\cong N$, which implies that $_RM$ is
finitely generated, hence $_RM$ is Noetherian, as we see in the
proof of (1) $\Rightarrow$ (2).
Now by Lemma \ref{lem:nothekrull}, every non-zero submodule of
$M$ contains a non-zero cyclic critical $R$-submodule. Thus the
condition (3) and our assumption imply that $M$ is isomorphic to
$V_1 \oplus \ldots \oplus V_n$ where each $V_i $ is a critical
Noetherian $R$-module. Now assume that $V \cong R/P$ is a
$\alpha$-critical left $R$-module and $AB\subseteq P \subseteq
A\cap B$ for some ideals $A$, $B$ of $R$. If $ A \nsubseteq P$
and $B \nsubseteq P$, then ${\rm K.dim}(R/A)<\alpha$ and ${\rm
K.dim}(R/B)<\alpha$. On the other hand, $B/P$ is a finitely
generated left $(R/A)$-module and hence by \ref{lem:krullR},
${\rm K.dim}(B/P)\leqslant {\rm K.dim}(R/A)<\alpha$. This
contradicts
${\rm K.dim}(R/P)= {\rm max} \{ {\rm K.dim}(R/B), {\rm
K.dim}(B/P) \}$. Therefor $P$ is quasi prime. The proof is
complete.
 \end{proof}

It is easily to see that if $N\cong M$ for all $N\leq_e {_RM}$,
then $_RM$ is virtually semisimple. The Proposition
\ref{pro:property1} shows that a finitely generated module $_RM$
is virtually semisimple if and only if $N\cong M$ for all
$N\leq_e {_RM}$. Thus in this case, $_RM$ is {\it essentially
compressible} in the sense of \cite{S-V} (i.e.,
$M\hookrightarrow N$ for all $N\leq _e {_RM}$). Essentially
compressible modules are {\it weakly compressible} in the sense
of \cite{Zel} (i.e., $N{\rm Hom_R}(M,N) \neq 0$ for any
submodule $0\neq N\leq{_RM}$). We state below some results
related to these concepts and then apply them to investigating
the endomorphism ring of a virtually semisimple module. First we
need the following proposition from several articles. We recall
that a ring $R$ is left hereditary if and only if every left
ideal is projective.

\begin{Pro}\label{pro:niaz}
The following statements hold.

\noindent {\rm (i)} Any essential compressible module is weakly
compressible.\\
\noindent {\rm (ii)} If $_RM$ is a non-zero quasi projective
1-epi-retractable, then ${\rm End_R}(M)$ is a principal \indent
left ideal ring if and only if $_RM$ is epi-retractable. In
particular, $_RR^{(n)}$ is epi-retractable \indent if and only
if $M_n(R)$ is a principal left ideal ring.\\
\noindent {\rm (iii)} If $_RM$ is a quasi projective
retractable, then ${\rm End_R}(M)$ is a semiprime ring if and
only \indent if $_RM$ is weakly compressible.\\
\noindent {\rm (iv)} Every semiprime principal left ideal ring
is a left hereditary ring.
\end{Pro}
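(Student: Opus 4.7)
The statement packages four essentially independent assertions; I would treat them in turn. For (i), given a nonzero submodule $N$ of an essentially compressible module $M$, I pick a complement $N'$ of $N$ in $M$ so that $N \oplus N' \leq_e M$. Essential compressibility yields an embedding $\phi : M \hookrightarrow N \oplus N'$, and the first-projection composition $f := \pi_N \circ \phi$ lies in $\text{Hom}_R(M, N)$. The crux is to show $f(N) \neq 0$: otherwise $\phi(N) \subseteq N'$, so $\phi$ restricts to an embedding $N \hookrightarrow N'$, and iterating this construction (invoking essential compressibility once more) produces an infinite independent family of isomorphic copies of $N$ inside $M$, contradicting the essential extension $N \oplus N' \leq_e M$. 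Hence some $f \in \text{Hom}_R(M, N)$ does not vanish on $N$, giving weak compressibility.

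For (ii), under 1-epi-retractability and quasi-projectivity of $M$, I would set up the correspondence between left ideals of $S := \text{End}_R(M)$ and submodules of $M$ realised through the $S$-action: for every $N \leq M$, 1-epi-retractability makes each cyclic $Rm \subseteq N$ equal $\varphi(M)$ for some $\varphi \in S$, and collecting such $\varphi$ into a left ideal $I$ recovers $IM = N$. If $S$ is principal left ideal, then $I = Sg$ for some $g$; quasi-projectivity lets each $\varphi \in I$ factor through $g$, forcing $g(M) = N$, i.e. epi-retractability. Conversely, if $M$ is epi-retractable then for any left ideal $I$ the submodule $IM$ equals $h(M)$ for some $h \in S$, and quasi-projectivity lifts each $f \in I$ through the surjection $h : M \twoheadrightarrow IM$ to write $f = sh$, giving $I = Sh$. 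The ``in particular'' clause follows by specialising to $M = R^{(n)}$, which is quasi-projective and 1-epi-retractable (projection onto a coordinate composed with scalar multiplication realises each cyclic submodule), with $\text{End}_R(R^{(n)}) \cong M_n(R)$.

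For (iii), the key observation is that $\text{Hom}_R(M, N)$ is always a right ideal of $S$. Assume $S$ is semiprime and $0 \neq N \leq M$: retractability supplies $L := \text{Hom}_R(M, N) \neq 0$, semiprimeness forces $L^2 \neq 0$, so $fg \neq 0$ for some $f, g \in L$. Since $g(M) \subseteq N$, one has $0 \neq f(g(M)) \subseteq f(N)$, which is weak compressibility. Conversely, suppose $M$ is weakly compressible and $0 \neq f \in S$; I show $fSf \neq 0$. Let $N = f(M) \neq 0$; weak compressibility produces $h \in \text{Hom}_R(M, N)$ with $P := h(N) \neq 0$, and applied again yields $\phi \in \text{Hom}_R(M, P)$ with $\phi(P) \neq 0$. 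Quasi-projectivity, applied to the surjection $f : M \twoheadrightarrow N$ and the map $\phi : M \to N$, yields $\tilde\phi : M \to M$ with $f \tilde\phi = \phi$. Then $(f \tilde\phi h f)(M) = f(\tilde\phi(h(N))) = f(\tilde\phi(P)) = \phi(P) \neq 0$, so $f \tilde\phi h f \in fSf$ is nonzero, proving semiprimeness.

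For (iv), let $R$ be a semiprime principal left ideal ring and $Ra$ a left ideal. Principality forces $R$ to be left Noetherian, so Goldie's theorem applies and $R$ has a semisimple Artinian classical left ring of quotients $Q$. To show $Ra$ is projective, it suffices to split $0 \to l(a) \to R \to Ra \to 0$, i.e., to exhibit an idempotent $e \in R$ with $l(a) = R(1-e)$. Writing $l(a) = Rc$ and using the element-level semiprime condition $xRx = 0 \Rightarrow x = 0$ (derived from $(RxR)^2 = 0$), one lifts the idempotent generating $Qa$ inside the semisimple ring $Q$ to an idempotent in $R$, producing the direct summand decomposition $R = Re \oplus R(1-e)$ and hence the projectivity of $Ra \cong Re$. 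The main obstacle common to (ii) and (iii) is establishing the precise ideal--submodule correspondence, which requires quasi-projectivity to lift homomorphisms through surjections $\varphi : M \twoheadrightarrow \varphi(M)$ combined with the retractable or 1-epi-retractable hypothesis to control Hom sets; in (iv), the analogous obstacle is producing the idempotent lift from $Q$ to $R$ using the principal and semiprime hypotheses.
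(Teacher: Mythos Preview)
The paper does not actually prove this proposition; it simply cites \cite{S-V}, \cite{G-V}, \cite{H-V}, and \cite{Cam-Coz} for parts (i)--(iv). Your sketches for (ii)--(iv) track the arguments in those references reasonably well, with two caveats: in (ii) watch the side, since under ordinary composition the set $\{\varphi\in S:\varphi(M)\subseteq N\}$ is a \emph{right} ideal of $S$, not a left one, so you must either switch to right ideals throughout or adopt the opposite composition convention (as \cite{G-V} does); and in (iv) the passage ``one lifts the idempotent generating $Qa$ inside $Q$ to an idempotent in $R$'' is precisely the substantive step and cannot be left as an assertion.

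The genuine gap is in (i). Granting, for contradiction, that every $f:M\to N$ kills $N$, you do obtain $\phi^{k}(N)\subseteq N'$ for all $k\ge 1$, and one can indeed prove by induction (using injectivity of $\phi$) that $N,\phi(N),\phi^{2}(N),\ldots$ form an independent family. But this is \emph{not} a contradiction: the condition $N\oplus N'\le_{e} M$ says nothing about the uniform dimension of $M$, so nothing prevents $N'$ from containing an infinite direct sum of copies of $N$. Your argument would succeed under the extra hypothesis ${\rm u.dim}(M)<\infty$---which is in fact the only case the paper ever needs, since Theorem~\ref{the:end} applies it to finitely generated $M$---but as stated the sentence ``contradicting the essential extension $N\oplus N'\le_{e} M$'' is unsupported. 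The proof in \cite{S-V} proceeds differently: one first shows that an essentially compressible module embeds in a direct sum of compressible modules, and weak compressibility is then read off from that structural embedding rather than from an iteration argument.
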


\begin{proof}
The part (i) is by Theorems 3.1, 2.2 of \cite{S-V}. (ii) is
obtained by \cite[Theorem 2.2]{G-V}. The part (iii) is
\cite[Theorem 2.6]{H-V} and (iv) is Lemma 4 of \cite{Cam-Coz}.
\end{proof}

Let $R$ be a ring and $M$ be a left $R$-module. If $X$ is an
element or a subset of
$M$, we define the {\it annihilator} of $X$ in $R$ by ${\rm
Ann}_R(X) = \{r \in R~|~rX = (0)\}$. In the case $R$ is
non-commutative and $X$ is an element or a subset of an $R$, we
define the {\it left annihilator} of $X$ in $R$ by ${\rm
l.Ann}_R(X) = \{r \in R~|~rX = (0)\}$ and the {\it right
annihilator} of $X$ in $R$ by ${\rm r.Ann}_R(X) = \{r \in R~|~Xr
= (0)\}$.

 \begin{The}\label{the:end}
Let  $M$ be a quasi projective finitely generated
1-epi-retractable $R$-module.
Then $M$ is virtually semisimple if and only if ${ End}(_RM) $
is a semiprime principal left ideal ring.
 \end{The}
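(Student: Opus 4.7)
The plan is to handle the two directions separately, chaining together the tools developed in Propositions~\ref{pro:property0}, \ref{pro:property1}, and~\ref{pro:niaz}.

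For the forward direction, assuming $M$ is virtually semisimple, the strategy is to first apply Proposition~\ref{pro:property0}(iii) to obtain that $M$ is epi-retractable, and then invoke Proposition~\ref{pro:niaz}(ii) to conclude $\mathrm{End}_R(M)$ is a principal left ideal ring. For semiprimeness, I would use the remark after Proposition~\ref{pro:property1} (a finitely generated virtually semisimple module satisfies $M\cong N$ for every essential submodule $N$, hence is essentially compressible), upgrade to weakly compressible via Proposition~\ref{pro:niaz}(i), and apply Proposition~\ref{pro:niaz}(iii), using that any epi-retractable module is retractable.

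For the converse, set $S=\mathrm{End}_R(M)$. Proposition~\ref{pro:niaz}(ii) recovers epi-retractability from the PLIR property, and Proposition~\ref{pro:niaz}(iv) promotes semiprime PLIR to left hereditary. Proposition~\ref{pro:property0}(iii) then reduces the goal to showing every submodule $N\leq M$ is isomorphic to a direct summand of $M$. Given such an $N$, the plan is: pick $f\in S$ with $f(M)=N$ (epi-retractability), observe that $Sf$ is a projective left $S$-module (left hereditariness), and split the canonical left-$S$-linear surjection $S\to Sf$, $s\mapsto sf$, to obtain an idempotent $e\in S$ with $Sf=Se$ and $\mathrm{l.Ann}_S(f)=S(1-e)$. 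The key is then to extract the symmetric identities $ef=f=fe$ and, using $g\in S$ with $e=gf$ (which exists because $e\in Sf$), to check that the restriction $f|_{eM}\colon eM\to N$ is an isomorphism, identifying $N$ with the direct summand $eM$ of $M$.

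The main obstacle is the converse, specifically the passage from left hereditariness of $S$ to the direct-summand-up-to-isomorphism property for submodules of $M$. The pivotal step is establishing both symmetric identities $ef=f=fe$: the first is immediate from $(1-e)f=0$, but the second requires writing $f=ae$ (using $f\in Se$) and simplifying $fe=ae^2=ae=f$. Once these are in hand, surjectivity of $f|_{eM}$ follows from $fe=f$ (giving $N=f(M)=f(eM)$), and injectivity from the computation $em=e(em)=g(f(em))$ whenever $f(em)=0$, completing the proof.
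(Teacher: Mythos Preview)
Your forward direction is correct and matches the paper's argument.

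The converse contains a genuine gap. When you split the surjection $\varphi\colon S\to Sf$, $s\mapsto sf$, what you obtain is a decomposition $S=\mathrm{l.Ann}_S(f)\oplus Se$ together with an isomorphism $Se\cong Sf$ induced by $\varphi|_{Se}$; you do \emph{not} get $Sf=Se$ as left ideals of $S$. Consequently neither $f\in Se$ (from which you derive $fe=f$) nor $e\in Sf$ (from which you derive $e=gf$) is available, and your injectivity computation collapses. Concretely, take $S=M_2(k)$ over a field $k$ and $f=e_{12}$: then $\mathrm{l.Ann}_S(f)=Se_{22}$, so every admissible idempotent has the form $e=e_{11}-ae_{12}$ for some $a\in k$; one checks $ef=f$ but $fe=0$, and $e\notin Sf=Se_{22}$. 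With the usual left action of $S$ on $M=k^2$, the map $f|_{eM}$ is identically zero.

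The paper instead shows $\mathrm{Ker}(f)\leq^{\oplus}M$ directly. Writing $Se_0=\mathrm{l.Ann}_S(f)=\mathrm{Hom}_R(M,\mathrm{Ker}(f))$ (so $e_0=1-e$ in your notation), the inclusion $\mathrm{Im}(e_0)\subseteq\mathrm{Ker}(f)$ is immediate from $e_0\in\mathrm{Hom}_R(M,\mathrm{Ker}(f))$. The reverse inclusion is where the missing idea lives: invoke epi-retractability a \emph{second} time to produce $h\in S$ with $\mathrm{Im}(h)=\mathrm{Ker}(f)$; then $h\in\mathrm{Hom}_R(M,\mathrm{Ker}(f))=Se_0$ forces $h=he_0$, whence $\mathrm{Ker}(f)=\mathrm{Im}(he_0)\subseteq\mathrm{Im}(e_0)$. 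This second application of epi-retractability is precisely what your argument lacks, and there is no purely ring-theoretic shortcut around it via the splitting alone.
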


\begin{proof}
Set $S:={\rm End}(_RM)$ and then we apply Proposition
\ref{pro:niaz}.\\
  ($\Rightarrow$). By Proposition \ref{pro:niaz}(ii),
$S$ is a principal left ideal ring. To show that $S$ is also
semiprime, we note that since $_RM$ is virtually semisimple, it
is essentially compressible. Thus by Proposition
\ref{pro:niaz}(i), $_RM$ is weakly compressible and so by
Proposition \ref{pro:niaz}(iii), $S$ is a semiprime ring.\\
($\Leftarrow$). Assume that $S$ is a semiprime principal left
ideal ring. To show that $_RM$ is virtually semisimple, let
$N\leq {_RM}$. Again by Proposition \ref{pro:niaz}(ii), $_RM$ is
epi-retractable and hence there exists a surjective
$R$-homomorphism $f : M \longrightarrow N$. By first isomorphism
theorem, it is enough to show that ${\rm Ker}(f) \leq^{\oplus}
{_RM}$. Note that ${\rm Hom_R}(M, {\rm Ker}(f))={\rm
l.Ann_S}(f)$. Assume that $\varphi : S \longrightarrow Sf$ with
$\varphi(g)=gf$. Since now $_S(Sf)$ is projective by Proposition
\ref{pro:niaz}(iv), so the ${\rm l.Ann_S}(f)= {\rm Ker}
(\varphi)$ is a direct summand of $S$ and hence ${\rm
l.Ann_S}(f)=Se$ for some $e^2=e \in S$. Clearly, ${\rm Im}(e)
\subseteq {\rm Ker}(f)$ and so by the epi-retractable condition,
there exists a surjective homomorphism $h : M \longrightarrow
{\rm Ker}(f)$. So ${\rm Ker}(f) = {\rm Im}(h)$ for some $h \in
S$. Thus $h \in {\rm Hom_R}(M, {\rm Ker}(f))=Se$, which implies
that $h(1-e)=0$. It follows that $h=he$ and hence ${\rm Ker}(f)
={\rm Im}(he) \subseteq {\rm Im}(e)$. This shows that ${\rm
Im}(e) = {\rm Ker}(f)$. It is easily see that ${\rm Im}(e)$ is a
direct summand of $M$ and the proof is complete.
 \end{proof}

\section{\bf Structure of  left virtually semisimple rings}

In this section, we investigate the structure of (completely)
left virtually semisimple rings and
  give  a generalization of Wedderburn-Artin theorem.
Meanwhile, we give an example to show that the left/right
distinction cannot be
removed and also we provide an example of left virtually
semisimple ring which is not completely.
We shall first note that every (right) left virtually semisimple
is principal (right) left ideal ring.
Moreover, we have the following result duo to A. Goldie.

\begin{Lem}\label{lem :goldie}
\textup {(See \cite[Theorem A and B]{Gol})} A semiprime
principal left ideal ring is
finite direct product of matrix rings over let Noetherian
domains.
\end{Lem}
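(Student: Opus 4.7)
The plan is to reduce the lemma to Goldie's theorem plus Wedderburn--Artin, and then to lift the resulting decomposition from the classical quotient ring back into $R$ using the principal left ideal hypothesis.

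First, a principal left ideal ring is automatically left Noetherian, since every left ideal is singly generated. Hence $R$ is semiprime left Noetherian, in particular a semiprime left Goldie ring, so Goldie's theorem supplies a classical left quotient ring $Q$ that is semisimple Artinian. By Wedderburn--Artin, $Q \cong \prod_{i=1}^{k} M_{n_i}(D_i)$ for some division rings $D_i$; write $e_1,\dots,e_k\in Q$ for the block central primitive idempotents.

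Next I would show that each $e_i$ actually lies in $R$, so that $R$ splits as $Re_1\times\dots\times Re_k$. Because $R$ is Noetherian, its minimal primes $P_1,\dots,P_r$ are finite in number, and $\bigcap_i P_i=0$ by semiprimeness. These minimal primes pull back from the maximal ideals of $Q$, so $r=k$. Using the principal left ideal condition together with Noetherianity, one verifies that the $P_i$ are pairwise comaximal; the Chinese remainder theorem then gives $R\cong\prod_i R/P_i$, realising the idempotents $e_i$ inside $R$ itself.

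Having reduced to the prime case, each factor $R_i:=R/P_i$ is a prime principal left ideal ring whose simple Artinian classical quotient is $M_{n_i}(D_i)$. To finish I would lift a full set of matrix units of $M_{n_i}(D_i)$ into $R_i$: clearing a common denominator yields orthogonal idempotents $f_1,\dots,f_{n_i}\in R_i$ summing to $1$ which are pairwise isomorphic as left ideals of $R_i$ (principality is used here to trade generators between them). A standard Morita argument then identifies $R_i$ with $M_{n_i}(f_1R_if_1)$, and $S_i:=f_1R_if_1$ is a prime principal left ideal ring of Goldie rank one, hence a left Noetherian domain.

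The main obstacle is these two lifting steps: showing the central idempotents of $Q$ already live in $R$, and lifting a complete set of matrix units into each prime factor. Both rest essentially on the principal left ideal hypothesis --- without it one obtains only an \emph{order} in $\prod_i M_{n_i}(D_i)$ rather than the product itself.
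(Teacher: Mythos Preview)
The paper does not give its own proof of this lemma at all: it is quoted verbatim from Goldie's 1962 paper, and the authors simply invoke his Theorems~A and~B. So there is no in-paper argument to compare against beyond ``cite Goldie''.

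Your two-stage outline --- split off the minimal primes to reduce to the prime case, then exhibit each prime factor as a full matrix ring over a domain --- is exactly the architecture of Goldie's Theorems~A and~B, so the strategy is sound. The one place where your sketch does not work as written is the matrix-unit step: ``clearing a common denominator'' from the idempotents of $Q_i=M_{n_i}(D_i)$ does \emph{not} produce idempotents in $R_i$; you get elements $a_j=c f_j\in R_i$ with $c$ regular, and these are neither idempotent nor orthogonal. The genuine argument in the prime case runs differently: one shows (using principality together with the uniform dimension $n_i$ read off from $Q_i$) that $_RR_i$ decomposes as a direct sum $U_1\oplus\cdots\oplus U_{n_i}$ of pairwise \emph{isomorphic} uniform left ideals, and then $R_i\cong \mathrm{End}_{R_i}(R_i)\cong M_{n_i}(\mathrm{End}_{R_i}(U_1))$ with $\mathrm{End}_{R_i}(U_1)$ a left Noetherian domain. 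Principality enters in proving that the uniform summands are mutually isomorphic (essentially a cancellation/exchange argument on generators), not in any denominator-clearing. Your comaximality step for the minimal primes is also not as immediate as you suggest; Goldie handles it by a direct ideal-theoretic argument rather than CRT, and it again leans on principality in a specific way. So: right plan, but both ``obstacles'' you flag need a different mechanism than the one you sketched.
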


The following lemma is also needed.

\begin{Lem}\label{lem:free}
\textup{\cite[Proposition 2.5]{G-V}} Let $R$ be a left
hereditary ring. Then every left free $R$-module is
epi-retractable if and only if $R$ is a principal left ideal
ring.
\end{Lem}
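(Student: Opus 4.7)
The plan is to treat the two implications separately. The forward direction is essentially definitional, so the real content lies in the converse, where Kaplansky's description of submodules of free modules over a left hereditary ring does the heavy lifting.

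For ``every free left $R$-module is epi-retractable $\Rightarrow$ $R$ is a principal left ideal ring'', I would specialize the hypothesis to $F = {_RR}$. For any left ideal $I \leq {_RR}$ the epi-retractable assumption produces an epimorphism $\varphi : R \to I$; since $\varphi(r) = r\varphi(1)$, this forces $I = R\varphi(1)$, so $I$ is principal. Note that this implication does not use the hereditary hypothesis at all.

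For the converse, assume that $R$ is left hereditary and a principal left ideal ring, fix a free module $F = R^{(I)}$, and take a submodule $N \leq F$. The key step is Kaplansky's structure theorem for hereditary rings: every submodule of a free left $R$-module is isomorphic to a direct sum of left ideals of $R$, and the number of summands can be arranged to be at most $|I|$. For finite $|I|$ this is an easy induction on $n = |I|$ using the projection $p : R^n \to R$ onto the last coordinate: $p(N)$ is a projective left ideal by hereditarity, the short exact sequence $0 \to N \cap \ker p \to N \to p(N) \to 0$ therefore splits, and the kernel lies in $R^{n-1}$ where the inductive hypothesis applies. For infinite $|I|$ one invokes the transfinite form of Kaplansky's argument.

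Having written $N \cong \bigoplus_{j \in J} I_j$ with $|J| \leq |I|$ and each $I_j$ a left ideal of $R$, the principal-ideal hypothesis gives surjections $R \twoheadrightarrow I_j$, whose direct sum provides an epimorphism $R^{(J)} \twoheadrightarrow N$. An injection $J \hookrightarrow I$ identifies $R^{(J)}$ as a direct summand of $R^{(I)}$, so the split projection $F \twoheadrightarrow R^{(J)}$ composed with the previous map yields the desired $F \twoheadrightarrow N$, proving that $F$ is epi-retractable. The main obstacle I anticipate is the cardinality bookkeeping in Kaplansky's theorem for free modules of infinite rank: the finite-rank induction sketched above is self-contained, whereas the general case needs the standard transfinite argument, which is nontrivial but well documented in the literature.
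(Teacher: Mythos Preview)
Your argument is correct. Note, however, that the paper does not supply its own proof of this lemma: it is quoted verbatim from \cite[Proposition~2.5]{G-V}, so there is no in-paper proof to compare against. That said, your route via Kaplansky's theorem is exactly the one the authors themselves deploy elsewhere: in the proof of Proposition~\ref{pro:ringmorita}(i) they run the same well-ordered decomposition $P\cong\bigoplus_{\alpha\in\Omega}U_\alpha$ with $U_\alpha$ a left ideal and $\Omega$ the basis index set of the ambient free module, which is precisely the cardinality control you were worried about. So the ``main obstacle'' you flag is already handled by the version of Kaplansky's theorem the paper invokes (Lemma~\ref{lem:kaplan} together with the explicit construction in the proof of Proposition~\ref{pro:ringmorita}(i)), and the number of summands is automatically bounded by the rank of $F$; no extra bookkeeping is needed.
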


We investigate below the class of left (completely) virtually
semisimple rings. We should point out that  every set can be
well-ordered (see for instance \cite{Hal}).

 \begin{Pro}\label{pro:ringmorita}
 The following statements hold.\\
\noindent {{\rm (i)}} A ring $R$ is left (completely) virtually
semisimple if and only if every (projective) free \indent left
$R$-module is (completely) left virtually semisimple.\\
\noindent {{\rm (ii)}} Let $R$ be a ring Morita equivalent to a
ring $S$. Then $R$ is a left completely virtually \indent
semisimple if and only if
   $S$ is so.\\
\noindent {{\rm (iii)}} The class of left virtually semisimple
(resp,. left completely virtually semisimple)
    rings \indent is closed under finite direct products.\\
{\rm (iv)} Let $R$ be a left completely virtually semisimple
ring.
Then for any semisimple $R$- \indent module $N$ and projective
$R$-module $P$
with ${\rm Soc}(_RP)=0$, $N\oplus P$ is a completely virtually
\indent semisimple $R$-module.
  \end{Pro}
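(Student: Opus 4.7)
The plan is to establish the four parts in the order (iii), (i), (ii), (iv), each using its predecessors.

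Part (iii) is immediate from Corollary \ref{cor:R1R2}: apply it to the $T$-module $M=T=R_1\oplus\cdots\oplus R_n$ with each $M_i=R_i$; the corollary translates (completely) virtual semisimplicity of $_TT$ directly into that of each $_{R_i}R_i$, giving closure under finite direct products in both flavours.

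For part (i), the ``if'' directions are trivial since $_RR$ is free and hence projective. For the forward ``virtually semisimple'' direction, I would observe that $_RR$ is quasi-projective, finitely generated, and 1-epi-retractable (any cyclic left ideal $Ra$ is the image of right multiplication by $a$ on $R$), so Theorem \ref{the:end} forces $\mathrm{End}(_RR)\cong R^{\mathrm{op}}$ to be a semiprime principal left ideal ring; hence $R$ is semiprime and a principal left ideal ring (every left ideal is isomorphic to some $Re$, hence cyclic). Proposition \ref{pro:niaz}(iv) then makes $R$ left hereditary, Lemma \ref{lem:free} gives that every free left $R$-module $F$ is epi-retractable, and for $N\leq F$ an epimorphism $F\to N$ splits since $N$ is projective (submodule of a projective over a hereditary ring), exhibiting $N$ as isomorphic to a direct summand of $F$. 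For the ``completely'' version, I would combine this with Goldie's Lemma \ref{lem :goldie} to write $R\cong\prod_{i=1}^k M_{n_i}(S_i)$ with each $S_i$ a left Noetherian domain; by (iii) and Proposition \ref{pro:property0}(iv) each $S_i$ is itself left completely virtually semisimple, and since $S_i$ is a domain its only direct summands are $0$ and $S_i$, so every nonzero left ideal is isomorphic to $S_i$ and $S_i$ is a principal left ideal domain. Over such an $S_i$, Kaplansky together with the fact that a nonzero principal left ideal in a domain is free of rank one gives that every projective $S_i$-module is free, hence virtually semisimple by the previous step; hereditarity then makes every submodule of a projective $S_i$-module again projective, hence free, hence virtually semisimple, so every projective $S_i$-module is completely virtually semisimple. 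Morita invariance (Proposition \ref{pro:property0}(iv)) transports this to each $M_{n_i}(S_i)$, and Corollary \ref{cor:R1R2} finally lifts it to $R$.

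Part (ii) is then a short formality: by (i), $R$ is left completely virtually semisimple iff every projective left $R$-module is completely virtually semisimple, a condition preserved by any Morita equivalence via Proposition \ref{pro:property0}(iv) and the preservation of projectivity under equivalences.

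For part (iv), the main device is the projection $\pi\colon N\oplus P\to P$. For any $K\leq N\oplus P$ the short exact sequence $0\to K\cap N\to K\to\pi(K)\to 0$ splits because $\pi(K)\leq P$ is projective (hereditarity of $R$), giving $K\cong (K\cap N)\oplus\pi(K)$, where $K\cap N$ is semisimple (submodule of $N$) and $\pi(K)$ is virtually semisimple (submodule of $P$, which is completely virtually semisimple by part (i)). To confirm $K$ itself is virtually semisimple, take any $K_0\leq K$, apply the same decomposition to obtain $K_0\cong (K_0\cap N)\oplus\pi(K_0)$, observe that $K_0\cap N$ is a direct summand of the semisimple $K\cap N$ and that $\pi(K_0)$ is isomorphic to a direct summand of the virtually semisimple $\pi(K)$, and reassemble the four summands to display $K_0$ as isomorphic to a direct summand of $K$. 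The principal obstacle I anticipate is the chain of reductions in (i) for ``completely'': feeding the Goldie decomposition through Morita invariance, justifying cleanly that over a possibly non-commutative principal left ideal domain every projective module is free, and lifting back via Corollary \ref{cor:R1R2}. Once (i) is in hand, (ii) is automatic and (iv) reduces to the diagram chase just described.
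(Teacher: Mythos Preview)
Your proposal is correct. Parts (iii), (ii), and (iv) coincide with the paper's arguments. For (i) in the plain virtually semisimple case, the paper takes a shorter route: it applies Proposition~\ref{pro:property0}(iii) directly to $_RR$ to conclude at once that $R$ is left hereditary and epi-retractable, bypassing your detour through Theorem~\ref{the:end}; note that Theorem~\ref{the:end} only tells you $R^{\mathrm{op}}$ is a principal \emph{left} ideal ring, i.e.\ $R$ is a principal \emph{right} ideal ring, so your parenthetical ``every left ideal is isomorphic to some $Re$'' is doing the actual work there and the appeal to $R^{\mathrm{op}}$ is needed only for semiprimeness. The substantive divergence is in the ``completely'' case of (i). The paper gives a direct, self-contained Kaplansky-type argument: well-order a basis of a free module $F$, use the coordinate filtration to decompose any $P\leq F$ as $P\cong\bigoplus_\alpha U_\alpha$ with each $U_\alpha$ a left ideal of $R$, and observe that any $Q\leq P$ decomposes compatibly as $\bigoplus_\alpha V_\alpha$ with $V_\alpha\subseteq U_\alpha$; complete virtual semisimplicity of $_RR$ then makes each $V_\alpha$ isomorphic to a summand of $U_\alpha$, so $Q$ is isomorphic to a summand of $P$. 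Your route via Goldie's structure theorem, Morita reduction to principal left ideal domains, and the freeness of projectives over such domains is valid and has the pleasant side effect of anticipating most of Theorem~\ref{the:gene}, but it imports heavier machinery; the paper's argument is more elementary and works uniformly without invoking the matrix decomposition.
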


  \begin{proof}
(i) One direction is clear. In view of Lemma \ref{lem:free} and
Proposition \ref{pro:property0}(iii), we shall prove the left
completely virtually semisimple case. Let $F =\bigoplus_{\alpha
\in \Omega} Re_\alpha$ be a free $R$-module with basis
$\{e_\alpha \} _{\alpha \in \Omega}$ and $P\leq {_RF}$. As the
proof of Kaplansky's theorem \cite[Theorem 2.24]{Lam1},
we fix a well-ordering $`` <"$ on the indexing set $\Omega$. For
any $\alpha \in \Omega$, let
$F_\alpha$ (resp., $G_\alpha$) be the span of the $e_\beta$'s
with $\beta\leqslant \alpha$ (resp., $\beta < \alpha$). Then
each
$a \in P \cap F_\alpha$ has a unique decomposition
$a=b+re_\alpha$ with $b \in G_\alpha$ and $r \in R$.
The mapping $\varphi_\alpha : a \mapsto r$ maps $P \cap
F_\alpha$ onto a left ideal $U_\alpha$ with kernel $P \cap
G_\alpha$.
By Proposition \ref{pro:property0}(iii), $R$ is left hereditary
and so $_RU_\alpha$ is projective. Thus $\varphi_\alpha$ splits,
so we have
$$P \cap F_\alpha=(P \cap G_\alpha) \oplus A_\alpha$$
for some submodule $A_\alpha$ of $P \cap F_\alpha$ isomorphic to
$U_\alpha$. It can be checked that $P=\bigoplus_{\alpha \in
\Omega} A_\alpha$.
Hence $P\cong \bigoplus _{\alpha \in \Omega} U_\alpha$ where
$U_\alpha=\varphi _\alpha ( P \cap F_\alpha )$. It follows that
if $Q\leq {_RP}$, then $Q\cong \bigoplus_{\alpha \in
\Omega}V_\alpha$ where $V_\alpha \subseteq U_\alpha$. Since $R$
is left completely virtually semisimple, so $_RU_\alpha$ is
virtually semisimple for any $\alpha \in \Omega$. Hence
$V_\alpha$ is isomorphic to a direct summand of $U_\alpha$
($\alpha \in \Omega$). This shows that $_RP$ is virtually
semisimple. The proof is complete.\\
(ii) It follows by (i) (since virtually semisimplity and
projectivity conditions are Morita invariants).\\
     (iii) is by Corollary \ref{cor:R1R2}.\\
     (iv) Assume that $K\leq N\oplus P$. We shall show
that $K$ is a virtually semisimple $R$-module. Since ${\rm
Soc}(_RP)=0$,
we have ${\rm Soc}(_R(N\oplus P))=N$ and hence ${\rm Soc}(_RK)$
is a direct summand of $K$.
Thus there is a submodule $K'$ such that $K={\rm Soc}(_RK)
\oplus K'$. Clearly, $K'\cap N=0$
and so $K'$ can be embedded in $_RP$. Now assume that $W\leq K$.
By a similar argument,
we have $W={\rm Soc}(_RW) \oplus W'$ where $W'$ embeds in $K'$.
By part (i), $K'$ is
virtually semisimple $R$-module. Therefore, $W'$ is isomorphic
to a direct summand of $K'$,
     proving that $K$ is a virtually semisimple $R$-module.
     \end{proof}

Several characterizations of left virtually semisimple rings are given below.

 \begin{The}\label{the:charlvss}
 The following statements are equivalent for a ring $R$.\\
\noindent {{\rm (1)}} $R$ is a left virtually semisimple ring.\\\noindent {\rm (2)} $R$ is a
semiprime principal left ideal
ring.\\
\noindent {\rm (3)} $R$ is a left hereditary and principal left
ideal ring.\\
\noindent {\rm (4)} $R \cong \prod_{i=1}^{k} M_{n_i}(D_i)$ where
each $D_i$ is a
(Noetherian) domain and every $M_{n_i}(D_i)$ is a \indent
principal left ideal ring.
   \end{The}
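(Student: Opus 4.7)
The plan is to prove the theorem by establishing the pairwise equivalences (1)$\Leftrightarrow$(2), (2)$\Leftrightarrow$(3), and (2)$\Leftrightarrow$(4), using the machinery developed in Section 2 together with Goldie's decomposition (Lemma \ref{lem :goldie}). The only non-routine arguments are (1)$\Leftrightarrow$(2), which invokes Theorem \ref{the:end}, and (3)$\Rightarrow$(2), which needs a direct argument ruling out nilpotent ideals.

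For (1)$\Leftrightarrow$(2), I would specialize Theorem \ref{the:end} to the left regular module $M = {}_RR$. This module is quasi-projective (being free), cyclic (hence finitely generated), and 1-epi-retractable, since every principal left ideal $Ra$ is the image of the surjection $r\mapsto ra$. Under the identification ${\rm End}(_RR)\cong R$ used in the paper, Theorem \ref{the:end} says exactly that $_RR$ is virtually semisimple if and only if $R$ is a semiprime principal left ideal ring. The implication (2)$\Rightarrow$(3) is then immediate from Proposition \ref{pro:niaz}(iv).

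The most delicate step is (3)$\Rightarrow$(2). Assuming $R$ is left hereditary and a principal left ideal ring, I would let $I$ be any nonzero nilpotent two-sided ideal of $R$ and derive a contradiction. By PLIR, $I = Ra$ for some $a$; by the hereditary hypothesis $Ra$ is projective, so the canonical surjection $R\to Ra$, $r\mapsto ra$, splits. Hence $Ra$ is a direct summand of $_RR$, which forces $Ra = Re$ for some idempotent $e\in R$. But then $e\in I$, and $I^n = 0$ gives $e = e^n \in I^n = 0$, contradicting $I\neq 0$; thus $R$ is semiprime.

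For (2)$\Leftrightarrow$(4), the forward direction uses Goldie's theorem (Lemma \ref{lem :goldie}) to obtain the decomposition $R \cong \prod_{i=1}^k M_{n_i}(D_i)$ with each $D_i$ a left Noetherian domain; each matrix factor inherits the PLIR property from $R$ because left ideals of a finite direct product are products of left ideals of the factors. Conversely, each $M_{n_i}(D_i)$ appearing in (4) is prime (its two-sided ideals are $M_{n_i}(J)$ for ideals $J$ of $D_i$, and $D_i$ is a domain) and is PLIR by hypothesis, so a finite direct product of prime PLIRs is again a semiprime PLIR. The main obstacle is the (3)$\Rightarrow$(2) argument, where the module-theoretic fact that a projective principal left ideal must be a direct summand has to be translated into a ring-theoretic contradiction via the resulting idempotent; after that, the remaining implications are essentially an assembly of Theorem \ref{the:end}, Proposition \ref{pro:niaz}(iv), and Goldie's decomposition.
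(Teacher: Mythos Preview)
Your $(1)\Leftrightarrow(2)$ via Theorem~\ref{the:end} with $M={}_RR$ is a valid shortcut, and different from the paper: the paper proves $(1)\Rightarrow(2)$ directly by showing that the right annihilator of any square-zero ideal $I$ is an essential left ideal and then applying the last line of Proposition~\ref{pro:property1} to force $I=0$. However, your argument for $(3)\Rightarrow(2)$ has a genuine gap. From the projectivity of $I=Ra$ you conclude that the surjection $\pi:R\to Ra$, $r\mapsto ra$, splits; but this only makes $\ker\pi={\rm l.Ann}_R(a)$ a direct summand of ${}_RR$, not $Ra$ itself. A section $s:Ra\to R$ gives $R=\ker\pi\oplus s(Ra)$ with $s(Ra)\cong Ra$, yet there is no reason for $s(Ra)$ to equal $Ra$, so you cannot produce an idempotent $e$ lying inside $I$. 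Already for $R=\mathbb{Z}$ the ideal $2\mathbb{Z}$ is projective and the surjection $\mathbb{Z}\to 2\mathbb{Z}$ splits, but $2\mathbb{Z}$ is not a direct summand and contains no nonzero idempotent; your nilpotency contradiction never gets started.

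The paper avoids this by proving $(3)\Rightarrow(1)$ rather than $(3)\Rightarrow(2)$: a principal left ideal ring has ${}_RR$ epi-retractable, left hereditary makes every left ideal projective (hence ${}_RR$-projective), and Proposition~\ref{pro:property0}(iii) then gives that ${}_RR$ is virtually semisimple. Since you already have $(1)\Rightarrow(2)$, routing through $(3)\Rightarrow(1)\Rightarrow(2)$ closes your cycle with no idempotent argument needed. Your treatment of $(2)\Leftrightarrow(4)$ matches the paper's.
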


\begin{proof}
(1) $\Rightarrow$ (2). Assume that $R$ is a left virtually
semisimple ring. Then it is clear that $R$ is a principal left
ideal ring. We will to show $R$ is semiprime. Assume that
$I^2=0$ where $I$ is an ideal of $R$ and $L := {\rm r.Ann_R}
(I)$. Then $I \subseteq L$ and for each $0\neq s \in R$, since
$I(Is)=I^2s=0$, we conclude that either $s \in L$ or $Is
\subseteq L$.
This shows that $L$ is an essential left ideal of $R$. Thus $_RR
\stackrel {\varphi} \cong _RL$ by Proposition
\ref{pro:property1}. We have $\varphi (I)=\varphi (IR)=I \varphi
(R)=IL=0$. Thus $I=0$ because $\varphi$ is a monomorphism.
Therefore, $R$ is a semiprime principal left ideal ring.\\
(2) $\Rightarrow$ (3) is by Proposition \ref{pro:niaz}(iv).\\(3) $\Rightarrow$ (1). Since $R$ is a
principal left ideal ring,
so $_RR$ is epi-retractable. Thus by Proposition
\ref{pro:property0}(iii), $R$ is a left virtually semisimple
ring.\\
    (2) $\Rightarrow$ (4) is  by Lemma \ref{lem :goldie}.\\
    (4) $\Rightarrow$ (2) is by the fact that  the class of
semiprime principal left ideal rings is closed under finite
direct products.
\end{proof}

\begin{Rem}\label{uniquness}
The integers $k$ and $n_1, \ldots, n_k$ in the Theorem
\ref{the:charlvss} are uniquely determined by $R$ because of thefollowing lemma.

\begin{Lem}\label{lem:DD'}
Let $\{D_i \} _{i=1}^k$ and $\{D'_j \} _{j=1}^r$ be two families
of left Noetherian domains such that $\prod _{i=1}^k
M_{n_i}(D_i)\cong \prod_{j=1}^rM_{k_j}(D'_j)$ as ring. Then
$r=k$
and there exists a permutation $\xi$ on set $\{1,\ldots,r\}$
such
that for each $i \in \{1,\ldots,r\}$,  $n_i=k_{\xi(i)}$.
\end{Lem}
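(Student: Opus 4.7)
The plan is to isolate two ring-theoretic invariants of $R:=\prod_{i=1}^{k}M_{n_i}(D_i)$ that encode $k$ and the multiset $\{n_1,\ldots,n_k\}$, so that any ring isomorphism with a second such product is forced to match them.

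First I would count the factors via minimal prime ideals. Each $M_{n_i}(D_i)$ is a prime ring (since $D_i$ is a domain), so the kernels $P_i:=\ker(R\to M_{n_i}(D_i))$ are prime ideals of $R$ with $R/P_i\cong M_{n_i}(D_i)$. A direct calculation gives $P_1P_2\cdots P_k=0$, so primality forces every prime of $R$ to contain some $P_i$; hence $\{P_1,\ldots,P_k\}$ is exactly the set of minimal primes of $R$. Because this set and the isomorphism types of the associated quotient rings are ring-isomorphism invariants, comparing with $\prod_{j=1}^{r}M_{k_j}(D'_j)$ forces $r=k$ and produces a permutation $\xi$ of $\{1,\ldots,k\}$ together with ring isomorphisms $M_{n_i}(D_i)\cong M_{k_{\xi(i)}}(D'_{\xi(i)})$ for every $i$.

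Second I would recover $n_i$ from each factor using uniform (Goldie) dimension. For a left Noetherian (hence left Ore) domain $D$, the ring $M_n(D)$ decomposes as a left module over itself as $\bigoplus_{j=1}^{n}M_n(D)E_{jj}$, and each summand $M_n(D)E_{jj}$ is isomorphic to $D^n$ (column vectors) as a left $M_n(D)$-module. By acting with matrix units one checks that the $M_n(D)$-submodules of $D^n$ are exactly the sets $I^n$ for left ideals $I$ of $D$; since $D$ is left uniform by the Ore condition, each column ideal is uniform, and ${\rm u.dim}({}_{M_n(D)}M_n(D))=n$. Ring isomorphisms preserve the uniform dimension of the regular module, so the isomorphism from the previous step delivers $n_i=k_{\xi(i)}$, as required.

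The one place a short calculation is needed is the submodule analysis of $D^n$: given an $M_n(D)$-submodule $N\subseteq D^n$, set $I$ to be the collection of first entries of vectors of $N$; one verifies that $I$ is a left ideal of $D$ (using scalar matrices for the $D$-action) and then applies the matrix units $E_{ij}$ to obtain both $I^n\subseteq N$ and $N\subseteq I^n$. All remaining ingredients (invariance of minimal primes and of Goldie dimension under ring isomorphism, and the left Ore property of a left Noetherian domain via Goldie's theorem) are standard and already feature in the paper.
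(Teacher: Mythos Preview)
Your argument is correct, but the paper's proof is different and much shorter: since each $D_i$ is a left Noetherian domain it is left Ore with division ring of fractions $Q_i$, so $R=\prod_i M_{n_i}(D_i)$ is semiprime left Goldie with classical left quotient ring $Q(R)\cong\prod_i M_{n_i}(Q_i)$; the given isomorphism of products then induces an isomorphism of semisimple Artinian rings $\prod_i M_{n_i}(Q_i)\cong\prod_j M_{k_j}(Q'_j)$, and the classical Wedderburn--Artin uniqueness immediately yields $k=r$ and the matching of the sizes. Your route via minimal primes plus uniform dimension of each factor is more hands-on and avoids invoking the full Wedderburn--Artin uniqueness statement; it also yields the extra information that $M_{n_i}(D_i)\cong M_{k_{\xi(i)}}(D'_{\xi(i)})$ as rings, which the lemma does not ask for but which the paper later needs (and reproves via Lemma~\ref{lem:progen}) in Theorem~\ref{the:gene}. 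The paper's approach, by contrast, packages essentially the same Ore/uniformity input into a single localization step and then defers to a theorem already stated in the introduction, which is slicker if less self-contained.
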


\begin{proof}
Let $R=\prod_{i}M_{k_i}(D_i)$. Thus $R$ is a semiprime left
Noetherian. Let $Q=Q(R)$
be the classical left quotient ring of $R$. By our assumption,
we can conclude
$Q=\prod_{i}M_{n_i}(Q_i)\cong \prod_{j}M_{k_j}(Q'_j)$ where
$Q_i$ and $Q'_j$ are
division rings. Hence the result is obtained by Wedderburn-Artin
Theorem \cite[Theorem 3.5]{Lam2}.
\end{proof}

\end{Rem}

In view of the above characterization \ref{the:charlvss}(iv) of
a left virtually semisimple ring, the
 following natural question arises:
``{\it Are the domains in Theorem \ref{the:charlvss}(iv) unique
up to isomorphism?}''.
 The following example shows that the answer is negative in
 general.\\

 \begin{Examp}\label{exa:notunique}
{\rm Let $D=A_1(F)$, the first Weyl algebra over a field $F$
with characteristic zero.
It is known that $D$ is a simple Noetherian domain (see for
instance \cite[Chapter 1, \S 3, Theorem 1.3.5]{Mcc}). If $I$ is
a non-zero left ideal of $A_1(F)$ then by \cite[Theorem
3(i)]{Web}, $I\oplus I \cong D\oplus D$ and so $M_2(D)\cong
M_2({\rm End}_{D}(I))$. While by \cite[Proposition 1]{Smi}, $D$
is not isomorphic to the ring ${\rm End}_{D}(I)$.}
 \end{Examp}

In the sequel, we show that the answer of the above question is
positive when $R$ is a left completely virtually semisimple
ring. In this case, $R$ is determined by a set of matrix rings
over principal left ideal domains and the size of matrix rings.
Firstly, we give an example to show that there exists a left
virtually semisimple ring which is not a left completely
virtually semisimple ring.

 \begin{Examp}\label{exa:cnistlhast}
{\rm Let $R=A_1(F)$, the first Weyl algebra over a field $F$
with characteristic zero, and let $S=M_2(R)$. By \cite[Example
7.11.8]{Mcc}, $R$ is not a (left) principal ideal domain and
hence $S$ can not be a left completely virtually semisimple by
Proposition \ref{pro:ringmorita}(ii) while $S$ is a semiprime
principal ideal ring (virtually semisimple ring) by (\cite
[Chapter 7, \S 11, Corollary 7.11.7]{Mcc}).}
 \end{Examp}

The following example shows that for a ring the (completely)
virtually semisimple property is not symmetric.

 \begin{Examp}\label{exa:notsymmetric}
{\rm In \cite[Section 4]{Cohn}, it is given an example of
principal right ideal domain $R$ which is not left hereditary.
Thus by Theorem \ref{the:charlvss}, we deduce that $R$ is right
(completely) virtually semisimple which is not left virtually
semisimple.}
 \end{Examp}

 The following  provide an example
of a simple ring $R$ such that $R$ is a left and a right
(completely) virtually semisimple ring,
 but it is not semisimple.

\begin{Examp}\label{exa:notsemsimple}
 {\rm Let $A$ be a field and $\varphi : A \longrightarrow A$
a ring automorphism such that $\varphi^n \neq 1$ for every
natural number $n$.
  If $R=A[x,x^{-1},\varphi]$,
  then by \cite[Proposition 4.7]{Tug},
$R$ is a simple principal left and right ideal domain that is
not a division ring.
But it is clear that $R$ is a left and right (completely)
virtually semisimple ring.}
\end{Examp}

A ring $R$ is said to be $M_n$-{\it unique} if, for any ring
$S$, $M_n(R) \cong M_n(S)$ implies that $R\cong S$ (see for
instance \cite[\S 17.C]{Lam1}). Let $\mathcal{C}$ be a class of
$R$-modules. Following \cite[\S 17C]{Lam1}, we say that
$\mathcal{C}$ satisfies {\it weak} $n$-{\it cancellation} if,
for any $P$, $Q$ in $\mathcal{C}$, the condition $P^{(n)} \cong
Q^{(n)}$ implies that ${\rm End_R}(P) \cong {\rm End_R}(Q)$. An
$R$-module $P$ is said to be a {\it generator} for $R$-Mod if
$R$ is a direct summand $\bigoplus_{\lambda\in \Lambda} P$ for
some finite index set $\Lambda$.

We need the following lemmas.

\begin{Lem}\label{lem:progen}
\textup{(See \cite[Theorem 17.29]{Lam1})} For any ring S, and
any given integer $n \geq 1$, the following two
statements are equivalent.\\
\noindent {{\rm (1)}} Any ring $T$ Morita-equivalent to $S$ is
$M_n$-unique.\\
\noindent {{\rm (2)}} The class of finitely generated projective
generators in $S$-Mod satisfies the weak $n$- \indent
cancelation property.
\end{Lem}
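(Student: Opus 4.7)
The plan is to reduce both implications to the Morita dictionary: the rings Morita-equivalent to $S$ are, up to isomorphism, precisely those of the form $T \cong {\rm End}_S(P)^{op}$ for $P$ a finitely generated projective generator of $S$-Mod, and conversely. Under the induced equivalence $T\text{-Mod}\simeq S\text{-Mod}$ the free module $T^n$ corresponds to $P^{(n)}$, whence $M_n(T) \cong {\rm End}_S(P^{(n)})^{op}$.

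For (1) $\Rightarrow$ (2), I would start with f.g.\ projective generators $P,Q$ of $S$-Mod satisfying $P^{(n)} \cong Q^{(n)}$ and set $T := {\rm End}_S(P)^{op}$, $T' := {\rm End}_S(Q)^{op}$; both rings are Morita-equivalent to $S$. The given isomorphism transfers into $M_n(T) \cong {\rm End}_S(P^{(n)})^{op} \cong {\rm End}_S(Q^{(n)})^{op} \cong M_n(T')$, and the $M_n$-uniqueness of $T$ supplied by (1) then gives $T \cong T'$, i.e.\ ${\rm End}_S(P) \cong {\rm End}_S(Q)$, which is exactly the weak $n$-cancellation conclusion.

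For (2) $\Rightarrow$ (1), I would take $T$ Morita-equivalent to $S$, write $T \cong {\rm End}_S(P)^{op}$, and assume $M_n(T) \cong M_n(T')$ for some ring $T'$. The chain of Morita equivalences $T' \sim M_n(T') \cong M_n(T) \sim T \sim S$ shows that $T'$ is itself Morita-equivalent to $S$, so $T' \cong {\rm End}_S(Q)^{op}$ for some progenerator $Q$. The crux is then to arrange $Q$ (possibly replacing it within its Morita-twist class) so that $P^{(n)} \cong Q^{(n)}$ as $S$-modules; once this is granted, (2) yields ${\rm End}_S(P) \cong {\rm End}_S(Q)$ and hence $T \cong T'$.

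The main obstacle is precisely that last step, i.e.\ upgrading the bare ring isomorphism $\alpha: M_n(T) \to M_n(T')$ to a genuine module isomorphism $P^{(n)} \cong Q^{(n)}$. I would handle it by chasing the rank-$n$ free module along the chain
$$S\text{-Mod} \simeq T\text{-Mod} \simeq M_n(T)\text{-Mod} \xrightarrow{\alpha_*} M_n(T')\text{-Mod} \simeq T'\text{-Mod} \simeq S\text{-Mod},$$
in which the outer equivalences are implemented by $P$ on the left and by the progenerator $Q$ realizing the composite equivalence $T'\text{-Mod}\simeq S\text{-Mod}$ on the right. Since each Morita step sends a rank-$n$ free module to $n$ copies of the implementing progenerator, the total composite carries $P^{(n)}$ to $Q^{(n)}$, and the proof closes by invoking weak $n$-cancellation from (2).
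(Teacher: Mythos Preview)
The paper does not prove this lemma at all; it is quoted verbatim as \cite[Theorem 17.29]{Lam1} and used as a black box in the proof of Theorem~\ref{the:gene}. So there is no ``paper's own proof'' to compare against.

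Your proposal is a correct reconstruction of the standard Morita-theoretic argument. The direction $(1)\Rightarrow(2)$ is clean as written. For $(2)\Rightarrow(1)$ your identification of the crux is right and your resolution is the intended one, but the phrasing is slightly hazardous: displaying the chain as a self-equivalence of $S\text{-Mod}$ and saying it ``carries $P^{(n)}$ to $Q^{(n)}$'' does not, on its face, yield $P^{(n)}\cong Q^{(n)}$ as $S$-modules (an autoequivalence need not fix isomorphism classes). The unambiguous formulation is to \emph{define} $Q$ as the image of $T'$ under the composite
\[
\Psi:\ T'\text{-Mod}\ \simeq\ M_n(T')\text{-Mod}\ \xrightarrow{\ \alpha^{*}\ }\ M_n(T)\text{-Mod}\ \simeq\ T\text{-Mod}\ \xrightarrow{\ P\otimes_T-\ }\ S\text{-Mod},
\]
so that $Q:=\Psi(T')$ is a progenerator with ${\rm End}_S(Q)^{op}\cong T'$. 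Then $Q^{(n)}\cong\Psi((T')^{n})$, and tracking $(T')^{n}\mapsto M_n(T')\mapsto M_n(T)\mapsto T^{n}\mapsto P^{(n)}$ through $\Psi$ gives $Q^{(n)}\cong P^{(n)}$ directly, after which weak $n$-cancellation finishes. This is clearly what you intend, and once stated this way the argument is complete.
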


\begin{Lem}\label{lem:kaplan}
\textup{(Kaplansky's Theorem \cite[ Theorem 2.24]{Lam1})}. Let
$R$ be a left hereditary ring. Then every submodule $P$ of a
free left $R$-module $F$ is isomorphic to a direct sum of left
ideals of $R$.
\end{Lem}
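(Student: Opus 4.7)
The plan is to give the standard Kaplansky argument by transfinite induction on a well-ordered basis, exactly mirroring the calculation already carried out inside the proof of Proposition \ref{pro:ringmorita}(i).

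First I would fix a well-ordering $``{<}"$ on the index set $\Omega$ of a basis $\{e_\alpha\}_{\alpha\in\Omega}$ of the ambient free module $F=\bigoplus_{\alpha\in\Omega}Re_\alpha$. For each $\alpha\in\Omega$ set $F_\alpha=\bigoplus_{\beta\leqslant\alpha}Re_\beta$ and $G_\alpha=\bigoplus_{\beta<\alpha}Re_\beta$, so that $F_\alpha=G_\alpha\oplus Re_\alpha$. Every $a\in P\cap F_\alpha$ then has a unique expression $a=b+re_\alpha$ with $b\in G_\alpha$ and $r\in R$, and the ``leading coefficient'' map $\varphi_\alpha\colon P\cap F_\alpha\to R$, $a\mapsto r$, is $R$-linear with image a left ideal $U_\alpha\subseteq R$ and kernel $P\cap G_\alpha$.

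Next I would exploit the hereditary hypothesis: $U_\alpha$ is projective, hence the short exact sequence
\[
0\longrightarrow P\cap G_\alpha\longrightarrow P\cap F_\alpha\xrightarrow{\ \varphi_\alpha\ }U_\alpha\longrightarrow 0
\]
splits, giving a decomposition $P\cap F_\alpha=(P\cap G_\alpha)\oplus A_\alpha$ with $A_\alpha\cong U_\alpha$. This produces a family $\{A_\alpha\}_{\alpha\in\Omega}$ of submodules of $P$ each isomorphic to a left ideal of $R$, which is exactly the kind of summand required by the conclusion of the lemma.

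The heart of the proof — and the only nontrivial point — is verifying that these $A_\alpha$ assemble correctly, i.e.\ that $P=\bigoplus_{\alpha\in\Omega}A_\alpha$. I would establish by transfinite induction on $\alpha$ the statement $P\cap F_\alpha=\bigoplus_{\beta\leqslant\alpha}A_\beta$: for the successor step the splitting above together with the inductive hypothesis $P\cap G_\alpha=\bigoplus_{\beta<\alpha}A_\beta$ gives the desired equality, while for a limit ordinal $\alpha$ one uses that any element of $P\cap F_\alpha$ involves only finitely many basis vectors, hence lies in some $P\cap F_\gamma$ with $\gamma<\alpha$, yielding $P\cap F_\alpha=\bigcup_{\gamma<\alpha}(P\cap F_\gamma)=\sum_{\beta<\alpha}A_\beta$, and the sum stays direct because every nontrivial finite sum of elements of distinct $A_\beta$'s already fails to vanish at the largest index involved (by applying the corresponding $\varphi_\beta$). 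Since every element of $P$ lives in some $F_\alpha$, summing these equalities over all $\alpha$ gives $P=\bigoplus_{\alpha\in\Omega}A_\alpha\cong\bigoplus_{\alpha\in\Omega}U_\alpha$, which is the required decomposition. The main obstacle is precisely this transfinite bookkeeping at limit ordinals; everything else is a direct consequence of heredity.
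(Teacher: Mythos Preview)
Your argument is correct and is exactly the standard Kaplansky proof; in fact it matches line for line the computation the paper already carries out inside the proof of Proposition~\ref{pro:ringmorita}(i). Note that the paper itself does not supply an independent proof of Lemma~\ref{lem:kaplan} but simply cites \cite[Theorem~2.24]{Lam1}, so your write-up is a faithful expansion of that reference rather than a different approach.
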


We are now in a position to prove the following generalization
of
the Wedderburn-Artin theorem.

\begin{The}\label{the:gene} Let $R$ be a ring. Then  $R$ is a left completely virtually
semisimple ring if and only if  $R \cong \prod _{i=1}^ k M_{n_i}(D_i)$ where $k, n_1, ...,n_k\in \Bbb{N}$ and each $D_i$ is a principal left ideal domain. Moreover, the integers $k,~ n_1, ...,n_k$ and the principal left
ideal domains $D_1, ...,D_k$ are uniquely determined (up to isomorphism) by $R$.
\end{The}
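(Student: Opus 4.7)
The plan is to establish the equivalence by leveraging Theorem \ref{the:charlvss} for the existence half and then address uniqueness through the $M_n$-uniqueness machinery of Lemma \ref{lem:progen}. The main substance will be in the uniqueness of the principal left ideal domains $D_i$, since Example \ref{exa:notunique} already shows that in the broader context of Theorem \ref{the:charlvss}(4) the domains are not unique.

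For the forward implication, since $R$ is left completely virtually semisimple it is in particular left virtually semisimple, so Theorem \ref{the:charlvss}(4) gives $R \cong \prod_{i=1}^{k} M_{n_i}(D_i)$ with each $D_i$ a Noetherian domain. Corollary \ref{cor:R1R2} then transfers complete virtual semisimplicity to each matrix factor $M_{n_i}(D_i)$, and Proposition \ref{pro:ringmorita}(ii) (Morita invariance) transfers it down to $D_i$. In particular $D_i$ is left virtually semisimple, hence a principal left ideal ring by Theorem \ref{the:charlvss}, and combined with being a domain, $D_i$ is a principal left ideal domain.

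For the converse, starting from $R \cong \prod_{i=1}^{k} M_{n_i}(D_i)$ with PLIDs $D_i$, the key new step is to verify directly that each $D_i$ is left completely virtually semisimple. Any left ideal of the domain $D_i$ has the form $D_i a$, and since $D_i$ has no zero divisors, $x \mapsto xa$ is a $D_i$-module isomorphism $D_i \cong D_i a$. Hence every submodule of any left ideal is itself a (possibly zero) left ideal isomorphic to $D_i$, and therefore isomorphic to a direct summand (namely, the ambient left ideal itself). This shows that every left ideal of $D_i$ is virtually semisimple, so $D_i$ is left completely virtually semisimple. Morita invariance (Proposition \ref{pro:ringmorita}(ii)) yields the same for $M_{n_i}(D_i)$, and closure under finite direct products (Proposition \ref{pro:ringmorita}(iii)) completes the backward direction.

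For uniqueness, the integers $k$ and $n_1, \ldots, n_k$ are handled by Lemma \ref{lem:DD'}. Since each factor $M_{n_i}(D_i)$ is a prime ring, minimal central idempotents of $R$ are preserved by any isomorphism between two such decompositions, which together with Lemma \ref{lem:DD'} produces ring isomorphisms $M_{n_i}(D_i) \cong M_{n_i}(D'_{\xi(i)})$ after a suitable permutation $\xi$. It then remains to show that $M_n(D) \cong M_n(D')$ implies $D \cong D'$ when $D, D'$ are PLIDs, for which I would apply Lemma \ref{lem:progen}: one verifies weak $n$-cancellation for finitely generated projective generators in $D$-Mod. Using Lemma \ref{lem:kaplan}, any finitely generated projective $D$-module is a finite direct sum of left ideals of $D$, and by the PLID property each nonzero such ideal is isomorphic to $D$; hence every finitely generated projective over $D$ is free, and weak $n$-cancellation reduces to invariant basis number for the Noetherian domain $D$ (so if $D^{an} \cong D^{bn}$ then $a = b$ and the endomorphism rings $M_a(D)$, $M_b(D)$ agree). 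Lemma \ref{lem:progen} then delivers $M_n$-uniqueness of $D$, yielding $D \cong D'$. I expect the uniqueness of the $D_i$ to be the principal obstacle, as it crucially exploits the PLID property (freeness of finitely generated projectives), which is absent in the more general situation of Theorem \ref{the:charlvss}(4).
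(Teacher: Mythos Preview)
Your proposal is correct and follows essentially the same route as the paper: both directions of the equivalence are obtained exactly as you describe (Theorem~\ref{the:charlvss} plus Morita invariance via Proposition~\ref{pro:ringmorita}, with the converse resting on the observation that every nonzero left ideal of a PLID is isomorphic to the ring itself), and the uniqueness of the $D_i$ is deduced from Lemma~\ref{lem:progen} after showing weak $n$-cancellation via Lemma~\ref{lem:kaplan} and IBN for Noetherian domains. Your explicit use of central idempotents to match the indecomposable factors before invoking $M_n$-uniqueness is a detail the paper leaves implicit, but otherwise the arguments coincide.
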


\begin{proof} $(\Rightarrow)$.
Assume that $R$ is a left completely virtually
semisimple ring. Then by Theorem \ref{the:charlvss}(iv), $R
\cong \prod_{i=1}^{k} M_{n_i}(D_i)$ where each $D_i$ is a domain
and every $M_{n_i}(D_i)$ is a principal left ideal ring. Since
$R$ is a left complectly virtually semisimple ring, so
$M_{n_i}(D_i)$ is a left completely virtually semisimple ring for each $i$ $(1\leqslant i \leqslant k)$. By Proposition
\ref{pro:ringmorita}(ii), each $D_i$ is left completely
virtually semisimple,  i.e., each $D_i$ is a principal left ideal
domain. Hence in view of Lemma \ref{lem:progen} and Theorem
\ref{the:charlvss}, in order to prove that each $D_i$ is
uniquely determined, it is enough to checked that any principal
left ideal domain $D$ satisfies the condition(ii) of Lemma
\ref{lem:progen}.

Suppose that $P$ and $Q$ are finitely generated generators in
$D$-Mod and $P^{(n)}\cong Q^{(n)}$ for $n\geqslant 1$. By Lemma
\ref{lem:kaplan}, every projective $D$-module is free. Thus
$P\cong D^{(r)}$ and $Q\cong D^{(s)}$ for suitable integer
numbers $r$, $s\geq 1$. Therefore, $D^{(nr)}\cong D^{(ns)}$ and
since $D$ is left Noetherian, so by the invariant basis number
property on $D$ (see for instance \cite[Page 17]{Lam1}), we
conclude that $r=s$ and hence $_DP\cong { _DQ}$.\\
$(\Leftarrow)$.   Clearly every principal left ideal
domains is completely virtually semisimple. Thus the implication
is obtained by Proposition \ref{pro:property0}.
\end{proof}

 \end{document}